\newtheorem{thm}{Theorem}[section]
\newtheorem{lem}[thm]{Lemma}
\newtheorem{cor}[thm]{Corollary}
\newtheorem{prop}[thm]{Proposition}
\theoremstyle{definition}
\newtheorem{defi}[thm]{Definition}
\newtheorem{ex}[thm]{Example}
\theoremstyle{remark}
\newtheorem{rem}[thm]{Remark}
\numberwithin{equation}{thm}
\newcommand{\eps}{\varepsilon}
\newcommand{\mfdspace}{\mathcal{M}(n,D)}
\newcommand{\mfdspaceCon}{\mathcal{M}(n,D,C)}
\newcommand{\bbN}{\mathbb{N}}
\newcommand{\bbS}{\mathbb{S}}
\newcommand{\dGH}{d_{\text{GH}}}
\DeclareMathOperator{\inj}{inj}
\DeclareMathOperator{\diam}{diam}
\DeclareMathOperator{\vol}{vol}
\begin{document}

\renewcommand{\labelenumi}{\roman{enumi})}

\title[Collapse of codimension 1]{A characterization of codimension one collapse under bounded curvature and diameter}
\author{Saskia Roos}
\address{Max-Planck-Institut für Mathematik, Vivatsgasse 7, 53111 Bonn, Germany}
\email{saroos@mpim-bonn.mpg.de}

\begin{abstract}
Let $\mfdspace$ be the space of closed $n$-dimensional Riemannian manifolds $(M,g)$ with $\diam(M) \leq D$ and $\vert \sec^M \vert \leq 1$. In this paper we consider sequences $(M_i,g_i)$ in $\mfdspace$ converging in the Gromov-Hausdorff topology to a compact metric space $Y$. We show on the one hand that the limit space of this sequence has at most codimension $1$ if there is a positive number $r$ such that the quotient $\frac{\vol(B^{M_i}_r(x))}{\inj^{M_i}(x)}$ can be uniformly bounded from below by a positive constant $C(n,r,Y)$ for all points $x \in M_i$. On the other hand, we show that if the limit space has at most codimension $1$ then for all positive $r$ there is a positive constant $C(n,r,Y)$ bounding the quotient $\frac{\vol(B^{M_i}_r(x))}{\inj^{M_i}(x)}$ uniformly from below for all $x \in M_i$. As a conclusion, we derive a uniform lower bound on the volume and a bound on the essential supremum of the sectional curvature for the closure of the space consisting of all manifolds in $\mfdspace$ with $C \leq \frac{\vol(M)}{\inj(M)}$ .
\end{abstract}

\maketitle

\section{Introduction}

Let $\mfdspace$ be the space of isometry classes of closed $n$-dimensional Riemannian manifolds $(M,g)$ with $\diam(M) \leq D$ and $\vert \sec^M \vert \leq 1$. Due to Gromov, \cite{GromovPreComp}, it is known that this space is precompact with respect to the Gromov-Hausdorff metric $\dGH$. A $\dGH$-convergent sequence $(M_i, g_i)_{i \in \bbN}$  in $\mfdspace$ is said to \textit{collapse} if its limit space is of lower dimension.

The first nontrivial example of  collapse was discovered and carried out by Marcel Berger in about 1962. He considered the Hopf fibration $\bbS^1 \rightarrow \bbS^3 \rightarrow \bbS^2(\frac{1}{2})$ and rescaled the metric tangent to the fibers by $\eps >0$ while keeping the original metric in the directions orthogonal to the fiber fixed. As $\eps \rightarrow 0$ the sectional curvature remains bounded while the injectivity radius converges uniformly to $0$ at each point. Furthermore, $\bbS^3$ resembles more and more a $2$-sphere with constant sectional curvature $4$ as $\eps \rightarrow 0$.

A cornerstone for the theory of collapse under bounded curvature is Gromov's characterization of almost flat manifolds \cite{GromovAlmostFlat}. For example, Fukaya's fibration theorems \cite{FukayaCollapse1}, \cite{FukayaCollapse2} can be understood as a parametrized version of \cite{GromovAlmostFlat}. In \cite{FukayaBoundary}, Fukaya applied these fibration theorems to the sequence of frame bundles of a collapsing sequence in $\mfdspace$ and derived a description of the boundary of $\mfdspace$ (see \cite[Theorem 10.1]{FukayaBoundary}). Cheeger, Fukaya and Gromov proved a simultaneously equivariant and parametrized version of this result  in \cite{CheegerFukayaGromov}. 

It is well-known that in general the elements in the boundary of $\mfdspace$ have singularities. Fukaya showed in \cite{FukayaBoundary} that the Hausdorff dimension of elements in the boundary of $\mfdspace$ is an integer. If the limit space of a collapsing sequence in $\mfdspace$  has codimension $1$ , Fukaya proved that it has to be a Riemannian orbifold with a $C^{1, \alpha}$-metric, \cite[Proposition 11.5]{FukayaOrbifold}. This motivates the main result of this paper which provides the following equivalent characterizations on sequences in $\mfdspace$ which drop at most one dimension in the limit.

\begin{thm}\label{MainTheorem}
Let $(M_i , g_i)_{i \in \bbN}$ be a sequence in $\mfdspace$ converging to a compact metric space $(Y,d)$ in the Gromov-Hausdorff topology. Then the following are equivalent
\begin{enumerate}
	\item $\dim_{\mathrm{Haus}}(Y) \geq (n-1)$,
		\item for all $r >0$ there is a positive constant $C(n,r,Y)$ such that
		\begin{align}\label{condition}
C \leq \frac{\vol(B^{M_i}_r(x))}{\inj^{M_i}(x)}
\end{align} holds for all $x \in M_i$ and all $i \in \bbN$,
	\item for some $r >0$ and a positive constant $C(n,r,Y)$ such that  \eqref{condition} holds for all $x \in M_i$ and all $i \in \bbN$.
\end{enumerate}
\end{thm}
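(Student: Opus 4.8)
The plan is to prove the cyclic chain of implications (i) $\Rightarrow$ (ii) $\Rightarrow$ (iii) $\Rightarrow$ (i), since the middle implication is trivial (take any $r>0$). The substantive content is in the two outer arrows, and both hinge on understanding the local structure of a collapsing manifold near a point $x$ in terms of the rescaled Cheeger--Fukaya--Gromov N-structure, or equivalently Fukaya's fibration theorem applied to the frame bundle. The key geometric quantity is that near $x$, after rescaling so that the curvature bound becomes $\leq \eps^2$, the manifold looks like a fiber bundle over a piece of the limit $Y$ with infranil fibers of dimension $n - \dim_{\mathrm{Haus}}(Y)$; the injectivity radius $\inj^{M_i}(x)$ is comparable to the diameter of the local fiber, while $\vol(B_r^{M_i}(x))$ is comparable to (the $(\dim Y)$-volume of a ball in $Y$) times (the volume of the fiber). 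Thus the quotient $\frac{\vol(B_r^{M_i}(x))}{\inj^{M_i}(x)}$ behaves, up to constants depending only on $n$ and $r$, like $\vol(\text{fiber})/\diam(\text{fiber})$.

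For (i) $\Rightarrow$ (ii): if $\dim_{\mathrm{Haus}}(Y) \geq n-1$, then the fibers have dimension $0$ or $1$. In the $1$-dimensional case the fiber is a circle (an infranilmanifold of dimension $1$), so $\vol(\text{fiber}) = \diam(\text{fiber}) \cdot \text{const}$, and the ratio is bounded below; in the $0$-dimensional case there is no collapse at $x$ and $\inj^{M_i}(x)$ is bounded below by a fixed constant while $\vol(B_r^{M_i}(x))$ is bounded below by the volume of a Euclidean-sized ball, again giving a lower bound. One has to be careful that the ``local fiber dimension'' can jump from point to point, but since it is always $\leq 1$ under hypothesis (i), a single constant $C(n,r,Y)$ works; I would extract this uniformity from the compactness of $Y$ together with the fact that the rescaled fibration theorem provides estimates uniform in $i$ once $Y$ (hence an upper bound on the number of $\eps$-balls needed to cover $Y$) is fixed.

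For (iii) $\Rightarrow$ (i), I argue the contrapositive: suppose $\dim_{\mathrm{Haus}}(Y) \leq n-2$. Then at points $x_i \in M_i$ realizing (or nearly realizing) the smallest injectivity radius, the local fiber has dimension $\geq 2$, so it is an infranilmanifold $N/\Gamma$ of dimension $k \geq 2$ whose diameter can be made small (of order $\inj^{M_i}(x_i) \to 0$) while its volume collapses faster: for a nilmanifold of dimension $k\geq 2$ one can collapse the volume like $\delta^{k}$ while the diameter stays of order $\delta$ along suitable sequences, so $\vol(\text{fiber})/\diam(\text{fiber}) \sim \delta^{k-1} \to 0$. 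More precisely, I would use that the minimal injectivity radius tends to $0$ (otherwise there is no collapse at all and $\dim Y = n$), locate the collapsing direction, and show $\frac{\vol(B_r^{M_i}(x_i))}{\inj^{M_i}(x_i)} \to 0$, contradicting the uniform lower bound in (iii). The main obstacle, and the place where the real work lies, is making the heuristic ``$\vol(B_r) \approx \vol(Y\text{-ball}) \times \vol(\text{fiber})$'' and ``$\inj \approx \diam(\text{fiber})$'' rigorous and \emph{uniform} in $i$ and in $x \in M_i$: this requires carefully invoking the equivariant fibration theorem of Cheeger--Fukaya--Gromov (or Fukaya's frame-bundle argument) at the correct rescaling at each point, controlling the metric on the fiber up to bi-Lipschitz constants depending only on $n$, and handling the points where the fiber dimension is not locally constant. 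I expect a secondary technical point to be relating the abstract ``fiber'' of the N-structure to the genuine injectivity radius at $x$, which needs the two-sided curvature bound and a margin-of-error argument on the size of the neighborhood over which the fibration is defined.
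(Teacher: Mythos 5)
Your overall architecture matches the paper's: pass to the Cheeger--Fukaya--Gromov fibration structure, estimate $\vol(B_r^{M_i}(x))$ from above and below by $\vol(\text{base ball})\cdot\vol(F)$ for the local fiber $F$, and compare $\inj^{M_i}(x)$ with the geometry of $F$. Your i) $\Rightarrow$ ii) sketch is essentially the paper's argument (circle fibers, $\vol(F)=2\inj(F)$, and a lower bound on the volumes of balls in the orbifold $Y$), modulo the preliminary reduction to invariant ($A$-regular) metrics that the paper carries out so that the $A$- and $T$-tensors of the submersions are uniformly bounded.

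There is, however, a genuine error in the heuristic driving your iii) $\Rightarrow$ i) argument: the claim that $\inj^{M_i}(x)$ is comparable to the \emph{diameter} of the local fiber, so that the quotient behaves like $\vol(F)/\diam(F)$. When $\dim F = k \geq 2$ the collapse can be anisotropic (e.g.\ the tori $\frac{1}{j^2}\bbS^1\times\frac{1}{j}\bbS^1$): the injectivity radius sees only the \emph{shortest} collapsing scale, and $\inj^{M_i}(x)$ may be far smaller than $\diam(F)$. In that case $\vol(F)/\inj^{M_i}(x)$ is far larger than $\vol(F)/\diam(F)$, and showing that the latter tends to $0$ like $\delta^{k-1}$ proves nothing about the former; likewise your picture ``volume $\sim\delta^k$, diameter $\sim\delta$'' presumes isotropic collapse, which you are not entitled to assume for an arbitrary sequence satisfying iii). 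The correct chain, and the real content of the paper, is the pair of inequalities $\vol(F)\leq C\,\inj(F)\cdot\big(\sinh(K\diam(F))/K\big)^{k-1}$ (a Heintze--Karcher type estimate) and $\inj(F)\leq\big(1+\tau(\inj^{M_i}(x)\,\vert\,C_A,C_T,k,K)\big)\inj^{M_i}(x)$, which together give $\vol(F)/\inj^{M_i}(x)\lesssim\diam(F)^{k-1}\rightarrow 0$. The second inequality --- bounding the \emph{intrinsic} injectivity radius of the fiber from above by the ambient one --- is exactly what you defer as a ``secondary technical point,'' but it is the principal technical work of the paper (Section 3, adapting Tapp's homotopy construction for bounded Riemannian submersions, which in turn requires the uniform $A$- and $T$-tensor bounds from the invariant metrics and, because the limit $Y$ may be singular in codimension $\geq 2$, a restriction to regular points of $Y$ via Naber--Tian before the submersion estimates can be applied). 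Without that inequality, or with the false substitute $\inj^{M_i}(x)\sim\diam(F)$, the contradiction in iii) $\Rightarrow$ i) does not go through.
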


The idea behind Theorem \ref{MainTheorem} is the following illustrative observation. Let $(M_i, g_i)_{i \in \bbN}$ be a collapsing sequence in $\mfdspace$. Then the $r$-balls around a sequence of points $p_i \in M_i$ contain all collapsing directions, while the injectivity radii at the points $p_i$ only represents the fastest scale of collapsing. Now, if the collapse has codimension $1$, it happens on the scale of the injectivity radius. Hence, the volume of the balls $B^{M_i}_r(p_i)$ and the injectivity radii $\inj^{M_i}(p_i)$ converge to $0$ at the same rate. Therefore, the ratio \eqref{condition} can be uniformly bounded from below. However, if the collapse has codimension larger than $2$, then the injectivity radius only represents the fastest scale of collapsing. Thus, the volume of the balls $B^{M_i}_r(p_i)$ converges on a larger scale to $0$ than the injectivity radii $\inj^{M_i}(p_i)$. Consequently, their quotient converges to $0$. 

\begin{ex}
Consider the following sequence of flat tori $(T_i \coloneqq \bbS^1 \times \frac{1}{i} \bbS^1 , g_i)_{i \in \bbN}$. Here $g_i$ is the product metric on on the product of circles of radii $1$ and $\frac{1}{i}$. This sequence collapses to $\bbS^1$ which is of codimension 1. As $\inj^{T_i}(x) \equiv \frac{\pi}{i}$, for any $ r >0$, there exists $I \in \bbN$ such that $\vol(B^{T_i}_r(x)) = 4 \pi^2 \left( \frac{\max\lbrace r, \pi \rbrace}{i} \right)$ for all $x \in T_i$, $i > I$. For simplicity we assume $r \leq \pi$. Combing these, we derive
\begin{align*}
\lim_{i \rightarrow \infty} \frac{\vol(B^{T_i}_r(x))}{\inj^{T_i}(x)} = \lim_{i \rightarrow \infty} \frac{4 \pi^2 \frac{r}{i}}{\frac{\pi}{i}} = 4\pi r > 0.
\end{align*}
Thus, this quotient can be uniformly bounded by a positive constant $C$, as stated in Theorem \ref{MainTheorem}.
\end{ex}

\begin{ex}
Similarly to the previous example we consider the sequence of flat tori $(T_j \coloneqq \frac{1}{j^2} \bbS^1 \times \frac{1}{j} \bbS^1, g_j)_{j \in \bbN}$. This limit collapses to a point. No matter how small we choose $r > 0$ there exists some  $J \in \bbN$ such that $\vol(B^{T_j}_r(x)) = \frac{2 \pi}{j^2} \cdot  \frac{2\pi}{j}$ for any $j > J$. As $\inj^{T_j}(x) \equiv \frac{\pi}{j^2}$, the same considerations as before leads to
\begin{align*} 
\lim_{j \rightarrow \infty} \frac{\vol(B^{T_j}_r(x))}{\inj^{T_j}(x)} = \lim_{j \rightarrow \infty} \frac{4 \pi}{j} =0 .
\end{align*}
Therefore, we cannot find a uniform positive lower bound for this quotient.
\end{ex}

The proof of Theorem \ref{MainTheorem} requires the improved version of Fukaya's fibration theorems derived in \cite{CheegerFukayaGromov}. First, we show that in order to prove Theorem \ref{MainTheorem}, it is enough to restrict to sequences of manifolds with invariant metrics, as introduced in \cite{CheegerFukayaGromov}. Then we proof that i) implies ii) by constructing a lower bound as required in \eqref{condition} for any given $r>0$. As the implication from ii) to iii) is trivial it remains to show that iii) implies i). This direction will be proved by contradiction. We will bound the volume of the ball in the manifold, up to a constant, by the injectivity radius and the diameter of the collapsing fibers. It remains to bound the injectivity of the fibers from above by the injectivity radius of the manifold in the related points. This is done by modifying the results of \cite{Tapp} for bounded Riemannian submersions. In the end, we show that the constructed upper bound on the quotient converges to $0$, giving a contradiction. 

The paper is organized as follows. In Section 2 we recall the theory of collapsing sequences in $\mfdspace$ developed by Cheeger, Fukaya and Gromov. In particular, we recall that each sufficiently collapsed manifold is a singular fibration with infranil fibers. 

Section 3 deals with bounded Riemannian submersions $\eta: M \rightarrow Y$, i.e.\ Riemannian submersions where the norm of the fundamental tensors $A$ and $T$ is bounded by positive constants $C_A$ resp.\ $C_T$. There we modify the results of \cite{Tapp} to obtain the following upper bound on the injectivity radius of the fibers $F_p \coloneqq \eta^{-1}(\lbrace p \rbrace )$, $p \in Y$.

\begin{prop}\label{injFiber}
Let $\eta: M^{n+k} \rightarrow Y^{n}$ be a bounded Riemannian submersion with $\vert \sec^M \vert \leq K$ for some $K > 0$. Assume further that $\inj^M(x) < \min \lbrace \frac{\pi}{\sqrt{K +  3C_A^2}} , \; \frac{1}{4}\inj^Y(p) \rbrace$ for some $x \in F_p$. Then 
\begin{align*}
\inj_{F_p} \leq \Big(1 + \tau(\inj^M(x) \vert C_A, C_T, k, K) \Big) \cdot \inj^M(x).
\end{align*}
Here $\tau(\eps \vert a_1 , \ldots , a_l)$ denotes a positive continuous function such that $\lim_{\eps \rightarrow 0} \tau(\eps \vert a_1 , \ldots , a_l) =0$ for fixed $ a_1 , \ldots , a_l$. The explicit expression of the constant $\Big(1 + \tau(\inj^M(x) \vert C_A, C_T, k, K) \Big)$ is given in the proof.
\end{prop}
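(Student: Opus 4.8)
The plan is to turn the smallness of $\inj^M(x)$ into a short, topologically essential loop inside the fibre $F_p$ and then read the injectivity radius of $F_p$ off that loop. Since $\inj^M(x) < \pi/\sqrt{K+3C_A^2} \le \pi/\sqrt{K}$ lies strictly below the conjugate radius of $M$ at $x$, Klingenberg's lemma applies at $x$: $\exp^M_x$ is a diffeomorphism on the open ball of radius $\inj^M(x)$, and there is a point $q$ at distance $\inj^M(x)$ from $x$ joined to $x$ by (at least) two distinct minimal geodesics $\gamma_1,\gamma_2$, which fit together to a geodesic loop $\gamma=\gamma_1\cdot\bar\gamma_2$ based at $x$ of length $2\inj^M(x)$. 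The remaining work is to push $\gamma$ into $F_p$, to bound the length it gains, and to conclude.

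\emph{Pushing the loop into the fibre.} As $\eta$ is $1$-Lipschitz, each $\eta\circ\gamma_i$ is a curve in $Y$ from $p$ to $\eta(q)$ of length $\le\inj^M(x) < \tfrac14\inj^Y(p)$, hence contained in a totally normal ball around $p$. Using the unique minimal geodesics in $Y$ from $p$ to the points $\eta(\gamma_i(t))$ and their horizontal lifts in $M$ (which exist since $M$ is complete, and which are themselves geodesics), I obtain a deformation retraction $\pi_{F_p}$ of the tube $N := \eta^{-1}(B^Y_{\inj^M(x)}(p))$ onto $F_p$ whose tracks are horizontal geodesics of length $\le\inj^M(x)$. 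Put $\sigma_i := \pi_{F_p}\circ\gamma_i$, a curve in $F_p$ from $x$ to $\bar q := \pi_{F_p}(q)$, and $\sigma := \sigma_1\cdot\bar\sigma_2$, a loop in $F_p$ based at $x$; the retraction also furnishes a homotopy, staying within distance $\inj^M(x)$ of $\gamma$, from $\gamma$ to $\sigma$.

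\emph{The length estimate -- the Tapp ingredient.} The decisive quantitative point is that $\pi_{F_p}$ is $(1+\tau)$-Lipschitz on $N$, with $\tau = \tau(\inj^M(x)\vert C_A,C_T,k,K)$. I would prove this by the method of \cite{Tapp}: differentiating $\pi_{F_p}$ along a horizontal geodesic of length $<\inj^M(x) < \pi/\sqrt{K+3C_A^2}$, the horizontal and vertical components of the relevant variation field obey the O'Neill Jacobi-type system, and the hypotheses $|\sec^M|\le K$, $|A|\le C_A$, $|T|\le C_T$ (the fibre dimension $k$ entering through the trace terms) force the associated solution operator to be $(1+\tau)$-close to the identity on geodesics this short -- the bound $\pi/\sqrt{K+3C_A^2}$ being exactly what rules out conjugate points along such horizontal geodesics. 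The explicit constant $1+\tau$ is then read off from these ODE comparison estimates, and $\ell(\sigma)\le(1+\tau)\,\ell(\gamma) = (1+\tau)\cdot 2\inj^M(x)$. I expect this to be the main technical obstacle, since it requires adapting Tapp's Jacobi-field machinery while keeping explicit control of the constants.

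\emph{Conclusion.} By Klingenberg's lemma applied to the compact fibre $F_p$, $\inj_{F_p} \le \tfrac12\,\ell_0(F_p)$, where $\ell_0(F_p)$ is the length of the shortest closed geodesic of $F_p$; so it suffices to produce in $F_p$ a closed geodesic of length $\le(1+\tau)\cdot 2\inj^M(x)$, and for that it is enough that $\sigma$ be homotopically essential in $F_p$, since then its free homotopy class contains a closed geodesic of length $\le\ell(\sigma)$. If instead $\sigma$ is inessential but $\inj_{F_p}(x)\le(1+\tau)\inj^M(x)$, we are done as well, because $\inj_{F_p}\le\inj_{F_p}(x)$. The remaining case -- $\sigma$ inessential \emph{and} $\inj_{F_p}(x)>(1+\tau)\inj^M(x)$ -- has to be excluded. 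In it, $\sigma_1$ and $\sigma_2$, having length $\le(1+\tau)\inj^M(x)$, both lie in a geodesic ball of $F_p$ on which $\exp^{F_p}_x$ is a diffeomorphism, so they are homotopic rel endpoints inside a metric ball of radius $(1+\tau)\inj^M(x)$ about $x$; lifting this homotopy together with the two retraction homotopies back to $M$ shows that $\gamma$ is contractible inside a metric ball $B^M_{R}(x)$ with $R$ of order $\inj^M(x)$. I would then get a contradiction by a localized Klingenberg argument: as $R$ is below the conjugate radius of $M$, $\exp^M_x$ restricts to a nonsingular, path-lifting local diffeomorphism on $B^M_{R}(0)\subset T_xM$, so the $\exp^M_x$-lift of $\gamma$ from the origin would close up, whereas comparing it with the lifts of the backtrack loop $\bar\gamma_2\cdot\gamma_2$ from the two distinct preimages of $q$ in $B^M_R(0)$ forces the lift of $\bar\gamma_2$ from one of those preimages to terminate at two different points -- impossible. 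Granting this localization of the classical injectivity-radius estimate, $\inj_{F_p}\le(1+\tau)\inj^M(x)$, which is the claim. The substance of the argument thus lies in two places: the constant-tracking Jacobi-field estimate of the third paragraph (the ``modification of \cite{Tapp}''), and this localized Klingenberg lemma.
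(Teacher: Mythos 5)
Your strategy coincides in outline with the paper's: produce the Klingenberg geodesic loop $\gamma$ of length $2\inj^M(x)$, push it into $F_p$ with multiplicative length loss $1+\tau$, and read off $\inj_{F_p}$ via Klingenberg's lemma in the fibre. The genuine gap is that the one step carrying all the content of the proposition --- the $(1+\tau)$-Lipschitz bound for your retraction $\pi_{F_p}$ --- is asserted and attributed to ``the method of \cite{Tapp}'' but never carried out. The proposition is purely quantitative: Tapp's theorem already yields \emph{some} constant $C(Y,k,C_A,C_T)$ with $l(\tilde{\gamma})\le C\, l(\gamma)$; the entire point is to show $C$ can be taken of the form $1+\tau(\inj^M(x)\,\vert\, C_A,C_T,k,K)$. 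In the paper this occupies essentially the whole proof: the vertical curve is split as $\beta_1\cdot\beta_2$; the bound $l(\beta_1)\le P\, l(\gamma)$ comes from solving the ODE \eqref{ODE} explicitly with the nullhomotopy constants $Q_t,\tilde{Q}_s$ supplied by Lemma \ref{NullHomotopConst} (this is exactly where the hypothesis $l(\gamma)<2\pi/\sqrt{K+3C_A^2}$ and the Gray--O'Neill bound $\sec^Y\le K+3C_A^2$ enter), after which one checks $P\to 0$; the bound $l(\beta_2)\le L\, l(\gamma)$ uses the holonomy Lipschitz constant $L=e^{C_T l(\gamma)}\to 1$. Your sketch names the right ingredients, but until the comparison estimate is actually performed with explicit constants, the claim $l(\sigma)\le(1+\tau)\,l(\gamma)$ --- and hence the proposition --- is not established. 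You correctly identify this as ``the main technical obstacle''; it is in fact the whole proof.

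Your closing case analysis, by contrast, addresses a point the paper glosses over: the Klingenberg loop need not be noncontractible in $M$ (the Berger spheres $\bbS^3\rightarrow\bbS^2$ are simply connected yet realize exactly the situation of the proposition), whereas the paper simply asserts that $\gamma$, and hence $\tilde{\gamma}$, is noncontractible. Your exclusion of the inessential case via a long-homotopy-lemma argument is the right repair, with one caveat: a local diffeomorphism on a ball has no general path-lifting property, so ``the homotopy stays in a small metric ball $B^M_R(x)$'' is not by itself a justification --- a curve in a small metric ball can have an $\exp^M_x$-lift that leaves $B_R(0)$. What actually makes the lifting work is that every loop in your null-homotopy has \emph{length} $O(\inj^M(x))$, so its lift, whose length is expanded by at most $\sup\Vert(\mathrm{D}\exp^M_x)^{-1}\Vert$, remains inside the ball of radius $<\pi/\sqrt{K}$ on which $\exp^M_x$ is nonsingular; this also quietly requires $\inj^M(x)$ small relative to $\pi/\sqrt{K}$, which is the regime of interest. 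With that correction the endgame is sound, and arguably more complete than the paper's.
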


In Section 4 we prove Theorem \ref{MainTheorem} using the strategy explained above. In conclusion we define the space $\mfdspaceCon$ consisting of all manifolds in $\mfdspace$ satisfying \eqref{condition} for fixed numbers $r$ and $C$. We show that there is a uniform bound on the essential supremum of the sectional curvature and a uniform lower bound on the volume for all $(n-1)$-dimensional metric spaces $Y$ in the closure of $\mfdspaceCon$.

\subsection*{Acknowledgments}

First I want to especially thank Werner Ballmann and Bernd Ammann for many helpful and enlightening discussions. I am very thankful for the great hospitality of the Max-Planck Institute for mathematics in Bonn. Furthermore, I would like to thank Renato G.\ Bettiol for proof reading and comments improving this paper.


\section{Collapsing theory for bounded curvature and diameter}

In this section we recall the relevant theorems about convergence and collapsing in $\mfdspace$. From this point on, we use the following notation: $\tau(c \vert x_1, \ldots , x_k)$ denotes a positive continuous function depending on $c$ and $x_1, \ldots, x_k$ such that $\lim_{c \rightarrow 0} \tau(c \vert x_1, \ldots , x_k) = 0$ for any fixed $x_1, \ldots , x_k$.

In \cite[Theorem 5.3]{GromovPreComp}, Gromov proved that $\mfdspace$ is precompact in the Gromov-Hausdorff topology. Furthermore, for any fixed positive $i_0$ the subspace 
\begin{align*}
\mathcal{M}(n,D,i_0) \coloneqq \lbrace (M,g) \in \mfdspace \vert \inj^M > i_0 \rbrace
\end{align*}
is precompact in the $C^{1, \alpha}$-topology (see \cite{CheegerFiniteness}, \cite{PetersC1a}). 

Without the uniform lower bound on the injectivity radius a sequence $(M_i, g_i)_{i \in \bbN}$ in $\mfdspace$ might \textit{collapse} to a metric space of lower Hausdorff dimension. In \cite{FukayaBoundary} Fukaya studied collapsing sequences in $\mfdspace$ considering the corresponding sequence of frame bundles $FM_i$.

\begin{thm}[Fukaya]\label{FukayaFiber}
Let $(M_i, g_i)_{i \in \bbN}$ be a sequence in $\mfdspace$ converging with respect to the Gromov-Hausdorff metric to a compact metric space $Y$. There exists a positive $\eps \coloneqq \eps(n,D)$ such that, for all $i$ with $\dGH(M_i,Y) \leq \eps$, there is a map $\eta_i: M_i \rightarrow Y$ and a manifold $\tilde{Y}$ on which $O(n)$ acts isometrically, and an $O(n)$-equivariant map  $\tilde{\eta}_i:FM_i \rightarrow \tilde{Y}$ such that 
\begin{align}\label{diagramm}
	\begin{xy}
	\xymatrix{
	FM_i \ar[r]^{\tilde{\eta}_i} \ar[d]_{\pi_i} & \tilde{Y} \ar[d]^{\pi} \\
	M_i \ar[r]_{\eta_i} & Y 
	} 
	\end{xy} 
\end{align}
commutes, and:
\begin{enumerate}
	\item $\tilde{Y}$ is a Riemannian manifold with  $C^{1, \alpha}$-metric tensors,
	\item $\tilde{\eta}_i$ is a fiber bundle with affine structure group and infranil fibers,
	\item $\tilde{\eta}_i$ is an almost Riemannian submersion, i.e.\ if $X \in T_x FM_i$ is perpendicular to the fibers of $\tilde{\eta}_i$ then 
	\begin{align*}
	e^{- \tau(\dGH(M_i,Y) \vert n,D)} < \frac{\Vert d\tilde{\eta}_i(X) \Vert}{\Vert X \Vert} < e^{\tau(\dGH(M_i,Y) \vert n, D)},
	\end{align*}
	\item $M_i$ and $Y$ are isometric to $FM_i / O(n)$ and $\tilde{Y}/O(n)$ respectively,
	\item for each $p \in Y$ the groups $G_{\tilde{p}} = \lbrace g \in O(n) \vert g(\tilde{p}) = \tilde{p})$ for $\tilde{p} \in \pi^{-1}(p)$ are isomorphic to each other. We set $G_{p} \coloneqq G_{\tilde{p}}$ for some fixed $\tilde{p}\in \pi^{-1}(p)$.
\end{enumerate}
\end{thm}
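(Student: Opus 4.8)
The plan is to follow Fukaya's original argument, whose essential device is to replace the collapsing manifolds $M_i$ by their orthonormal frame bundles $FM_i$, on which the collapse becomes regular. First I would equip each $FM_i$ with the canonical connection metric $\hat g_i$ determined by the Levi-Civita connection of $g_i$ together with a fixed bi-invariant metric on $\mathfrak{o}(n)$; with respect to $\hat g_i$ the projection $\pi_i : FM_i \to M_i$ is a Riemannian submersion with totally geodesic fibers isometric to $(O(n),\text{bi-invariant})$, and $O(n)$ acts by isometries with $FM_i/O(n) = M_i$. Since only $\vert \sec^{M_i} \vert \leq 1$ is assumed, the curvature of $\hat g_i$ need not be controlled (it involves $\nabla \mathrm{Rm}$), so I would first smooth: replace $g_i$ by an $O(n)$-natural, $C^{1,\alpha}$-close metric with uniform bounds $\vert \nabla^k \mathrm{Rm} \vert \leq C_k$ (Abresch / Cheeger--Fukaya--Gromov smoothing), which changes $\dGH(M_i,Y)$ and the curvature bounds only by a controlled amount. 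After smoothing, the frame bundles $(FM_i, \hat g_i)$ have uniformly bounded curvature.

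The crucial step --- and the one I expect to be the main obstacle --- is to show that the frame bundles do not collapse in the singular sense, i.e.\ that they subconverge to a \emph{manifold}. Concretely, a short geodesic loop at $x \in M_i$ lifts to a path in $FM_i$ whose endpoints differ by its $O(n)$-holonomy; the collapsing directions of $M_i$ are generated, by the Cheeger--Gromov local theory, by a local nilpotent group action, and I would show that the associated holonomy prevents the lifted short orbits from closing up unless one also moves in the fiber. Equivalently, the local nilpotent ($N$-)structure governing the collapse of $M_i$ lifts to a \emph{pure} $N$-structure on $FM_i$ with free local action, so that the orbit space is smooth. Combined with the uniform curvature bound, this yields, after passing to a subsequence, equivariant Gromov--Hausdorff convergence $(FM_i, O(n)) \to (\tilde Y, O(n))$ in which $\tilde Y$ is a Riemannian manifold with $C^{1,\alpha}$-metric and $O(n)$ acts isometrically; this is item i). Setting $Y = \tilde Y / O(n)$ (verifying it agrees with the given limit, using uniqueness of Gromov--Hausdorff limits and continuity of the quotient) and $M_i = FM_i/O(n)$ gives the vertical maps of the diagram and item iv).

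Next I would produce the maps themselves. The collapse of $FM_i$ onto $\tilde Y$ is exactly a bounded-curvature collapse carrying a pure nilpotent structure, so Fukaya's fibration construction (center-of-mass / harmonic averaging of an almost-projection read off from the convergence) provides $O(n)$-equivariant maps $\tilde\eta_i : FM_i \to \tilde Y$ that are fiber bundles; their fibers are the compact leaves of the $N$-structure, hence infranilmanifolds, and by Gromov's almost-flat theorem together with Ruh's affinization these carry a canonical flat affine connection, making the structure group affine --- this is item ii). The almost-Riemannian-submersion estimate iii) follows from the same construction: on vectors horizontal to the fibers, $\tilde\eta_i$ distorts lengths by a factor $e^{\pm \tau(\dGH(M_i,Y)\,\vert\,n,D)}$, where the error is controlled by the (now $C^{1,\alpha}$) distance of the convergence. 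Pushing $\tilde\eta_i$ down through the quotients defines $\eta_i : M_i \to Y$ and makes the diagram \eqref{diagramm} commute by construction, since every map in sight is $O(n)$-equivariant.

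Finally, for item v), I would use that the fibers of $\pi : \tilde Y \to Y$ are single $O(n)$-orbits: for $p \in Y$ and $\tilde p, \tilde p' \in \pi^{-1}(p)$ there is $g \in O(n)$ with $g\tilde p = \tilde p'$, whence $G_{\tilde p'} = g\, G_{\tilde p}\, g^{-1}$, so all isotropy groups over $p$ are conjugate in $O(n)$, in particular mutually isomorphic, and $G_p$ is well defined up to isomorphism. The heart of the whole argument remains the non-singularity of $\tilde Y$, i.e.\ the holonomy / pure-$N$-structure estimate, which is precisely where the passage to the frame bundle is indispensable: it is the $O(n)$-direction added by framing that absorbs the finite isotropy responsible for the orbifold singularities of $Y$ downstairs.
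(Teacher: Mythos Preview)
The paper does not give its own proof of this theorem: it is stated with attribution to Fukaya and cited from \cite{FukayaBoundary} as background, so there is no in-paper argument to compare against. Your outline is a faithful high-level sketch of Fukaya's original frame-bundle strategy and correctly identifies the main ingredients --- the connection metric on $FM_i$ with its $O(n)$-action, the Abresch-type smoothing needed to control the curvature of $FM_i$, the central fact that the lifted $N$-structure on the frame bundle is pure so that the equivariant limit $\tilde Y$ is a smooth manifold, the fibration/almost-Riemannian-submersion construction for $\tilde\eta_i$, and the conjugacy argument for item~v). One small remark on the step you flag as the main obstacle: in Fukaya's treatment the smoothness of $\tilde Y$ is obtained not so much via a holonomy-of-short-loops estimate as by the observation that any local isometry fixing an orthonormal frame must be the identity, which forces the lifted local nilpotent action to be free; your holonomy description is the same mechanism viewed from a slightly different angle.
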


We henceforth use the notation introduced in this theorem repeatedly.

Another approach to collapse under bounded curvature was carried out by Cheeger and Gromov (\cite{CheegerGromovCollapse1}, \cite{CheegerGromovCollapse2}). They generalized local group actions and introduced an action of a sheaf of groups. In particular, they considered actions of sheaves of tori with additonal regularity conditions. This defines the so-called $F$-structure (``$F$" stands for \textit{flat}). Cheeger and Gromov proved that each sufficiently collapsed complete Riemannian manifold admits an $F$-structure of positive rank. This approach does not require an upper bound on the diameter of the manifold.

Combining these two approaches, Cheeger, Fukaya and Gromov introduced in \cite{CheegerFukayaGromov} a nilpotent structure ($N$-structure) and showed its existence on each sufficiently collapsed part of a complete Riemannian manifold. Roughly, if $M$ is sufficiently collapsed, its frame bundle $FM$ is the total space of a fibration with infranil fibers and affine structural group. Thus, there is a sheaf on $FM$ whose local sections are given by local right invariant vector fields on the fiber. 

A further main result of their article \cite{CheegerFukayaGromov} is the existence of \textit{invariant metrics} on manifolds admitting an $N$-structure. These metrics are invariant in the sense that the local sections of the sheaf on the frame bundle are given by local Killing fields.

To obtain such a metric they first applied the following theorem due to Abresch \cite{Abresch} to obtain uniform bounds on the derivatives of the curvature (see also \cite{Shi}) . 

\begin{thm}[Abresch]\label{SmoothMetric}
For any $\eps >0$ and $n \in \bbN$ there is a smoothing operator $S_{\eps}$ such that on any complete Riemannian manifold $(M,g)$ with $\vert \sec^g \vert \leq 1$ the metric $\tilde{g} \coloneqq S_{\eps}(g)$ satisfies
\begin{enumerate}
	\item $e^{-\eps}g < \tilde{g} < e^{\eps}g$,
	\item $\vert \nabla - \tilde{\nabla} \vert < \eps$,
	\item $\vert \tilde{\nabla}^j \tilde{R} \vert < A_j(n,\eps)$ for all $j \geq 0$.
\end{enumerate}
\end{thm}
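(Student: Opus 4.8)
The plan is to produce the smoothing operator $S_\eps$ by evolving the given metric under the Ricci flow for a short, $\eps$-dependent time, and to extract all three estimates from the uniformity of Shi's short-time existence and derivative bounds. The decisive point is that every constant arising this way depends only on $n$, on the curvature bound (normalized to $\vert \sec^g\vert \le 1$), and on the elapsed time, and is therefore independent of the particular manifold — which is exactly what a \emph{universal} smoothing operator demands. Since the Ricci flow is diffeomorphism-equivariant it descends to isometry classes, so the assignment $g \mapsto g(t_0)$ is well defined and consistent across the whole class $\mfdspace$.

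First I would invoke Shi's short-time existence theorem for the flow $\partial_t g(t) = -2\operatorname{Ric}(g(t))$, $g(0)=g$, on complete manifolds of bounded curvature. Because $\vert\sec^g\vert \le 1$ controls the full curvature tensor by a dimensional constant, there is a definite time $T = T(n) > 0$ on which the flow exists and stays uniformly bounded, $\vert \mathrm{Rm}(g(t))\vert \le c(n)$ for $t \in [0,T]$. I would then set $\tilde g := g(t_0)$ for a time $t_0 = t_0(\eps) \in (0,T]$ still to be fixed, and define $S_\eps(g) := g(t_0)$.

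Next I would read off the three conclusions from short-time estimates. For i), the evolution equation gives $\vert\partial_t g\vert = 2\vert\operatorname{Ric}\vert \le 2(n-1)$, so integrating yields $\Vert g(t_0)-g\Vert \le 2(n-1)\,t_0$, hence $e^{-\eps}g < \tilde g < e^{\eps}g$ once $t_0$ is small. For iii), Shi's derivative estimates give $\vert\tilde\nabla^{j}\tilde R\vert = \vert \nabla^{j}\mathrm{Rm}(g(t))\vert \le C_j(n)\,t^{-j/2}$ for $0 < t \le T$, so at the fixed time $t_0$ one obtains the bounds $A_j(n,\eps) := C_j(n)\,t_0^{-j/2}$ (these may grow as $\eps \to 0$, which the statement allows). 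Finally, for ii), the Christoffel symbols evolve by $\partial_t\Gamma \sim g^{-1}\nabla\operatorname{Ric}$, which by the $j=1$ Shi estimate is controlled by $C_1(n)\,t^{-1/2}$; since $t^{-1/2}$ is integrable near $0$, integration gives $\vert\nabla - \tilde\nabla\vert \le 2C_1(n)\sqrt{t_0} \to 0$, so ii) holds once $t_0$ is small. Choosing $t_0(\eps)$ small enough to force i) and ii) simultaneously then pins down the operator.

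The hard part is the uniform derivative estimate underlying iii) and its compatibility with ii): one must control all derivatives of curvature at positive time starting from a metric on which \emph{no} derivative bounds are assumed a priori, with constants depending only on $n$ and the curvature bound. The mechanism is the maximum principle applied to the parabolic evolution of $\vert\nabla^{j}\mathrm{Rm}\vert^{2}$, whose reaction terms are absorbed inductively using the lower-order bounds; the delicate feature is that the estimate must stay valid down to $t \to 0$, with the $t^{-j/2}$ blow-up rate, since it is precisely the integrability of the $j=1$ bound near $t=0$ that produces the connection estimate ii). Abresch's original argument reaches the same conclusions by a direct intrinsic mollification of the metric rather than by the flow, but the flow formulation makes the uniformity over the entire class $\mfdspace$ most transparent, and it is this uniformity, not mere existence, that the later collapsing arguments will rely on.
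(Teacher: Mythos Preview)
The paper does not prove this theorem at all: it is quoted as a known result, attributed to Abresch with a parenthetical ``see also \cite{Shi}'', and is used as a black box. So there is no proof in the paper to compare against.

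Your sketch is essentially the Ricci-flow proof due to Shi, and it is correct in outline. The three steps --- short-time existence with a uniform curvature bound depending only on $n$, Shi's local derivative estimates $\vert \nabla^{j}\mathrm{Rm}(g(t))\vert \le C_j(n)\,t^{-j/2}$, and integration of $\partial_t g$ and $\partial_t \Gamma$ in time to obtain i) and ii) --- are the standard ingredients, and your observation that the $t^{-1/2}$ singularity of $\nabla\mathrm{Ric}$ is integrable near $t=0$ is exactly what makes ii) work. One small slip: in the bound for i) you write $\vert\mathrm{Ric}\vert \le (n-1)$, but along the flow the curvature is only bounded by the a priori constant $c(n)$ from Shi's existence theorem, not by $1$; this is harmless since the resulting bound is still $O(t_0)$ and hence $<\eps$ for $t_0$ small.

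As you yourself note, Abresch's original construction is different: it proceeds by an intrinsic convolution/averaging of the metric (using the exponential map and a center-of-mass construction) rather than an evolution equation. The Ricci-flow route is cleaner for obtaining the higher derivative bounds iii) uniformly, while the averaging route is more elementary and avoids the PDE machinery. Either yields a smoothing operator with the stated properties, and the paper is agnostic as to which one is meant.
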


Additionally, Rong showed that, for sufficiently small $\eps$, we have the following bounds for the sectional curvature of $ S_{\eps}(g)$, c.f.\ \cite[Proposition 2.5]{RongSectionalCurv}.

\begin{prop}[Rong]\label{RongSec}
There is a constant $\delta > 0$ such that for any complete Riemannian manifold $(M,g)$ with $\vert \sec^g \vert \leq 1$ and any $0 \leq \eps \leq \delta$, there is a positive constant $C(n)$ such that the metric $\tilde{g} \coloneqq S_{\eps}(g)$ satisfies
\begin{align*}
\min \sec^{\tilde{g}} - C(n)\eps \leq \sec^g \leq \max \sec^{\tilde{g}} + C(n) \eps.
\end{align*}
\end{prop}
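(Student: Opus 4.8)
The plan is to prove the pointwise estimate $\bigl| \sec^g(\sigma) - \sec^{\tilde g}(\sigma) \bigr| \le C(n)\eps$ for every point $p \in M$ and every $2$-plane $\sigma \subset T_pM$; the proposition then follows at once, since for arbitrary $\sigma$ this gives $\sec^g(\sigma) \le \sec^{\tilde g}(\sigma) + C(n)\eps \le \max\sec^{\tilde g} + C(n)\eps$ and likewise $\sec^g(\sigma) \ge \min\sec^{\tilde g} - C(n)\eps$. Fixing $\sigma$ and a $g$-orthonormal basis $X,Y$ of it, so that $\sec^g(\sigma) = R^g(X,Y,Y,X)$, the metric comparison $e^{-\eps}g < \tilde g < e^\eps g$ of Theorem \ref{SmoothMetric} shows that the Gram determinant of $(X,Y)$ with respect to $\tilde g$ equals $1 + \tau(\eps \vert n)$; combined with $\vert \sec^g(\sigma) \vert \le 1$ this yields $\bigl| \sec^g(\sigma) - \sec^{\tilde g}(\sigma) \bigr| \le C(n)\, \bigl| R^g - R^{\tilde g} \bigr|_g + \tau(\eps \vert n)$, and since the $\tau$-term is itself $\le C(n)\eps$ for small $\eps$, everything reduces to the pointwise bound $\bigl| R^g - R^{\tilde g} \bigr|_g \le C(n)\eps$.

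To compare the curvature tensors, introduce $D \coloneqq \nabla^g - \tilde\nabla$, the difference of the two Levi-Civita connections — a tensor, symmetric in its covariant slots. By the bound $\vert \nabla^g - \tilde\nabla \vert < \eps$ of Theorem \ref{SmoothMetric} (together with the metric comparison, to pass between $g$- and $\tilde g$-norms) one has $\vert D \vert_{\tilde g} \le C(n)\eps$. The classical identity for the curvatures of two torsion-free connections reads
\begin{align*}
R^g(X,Y)Z - R^{\tilde g}(X,Y)Z = (\tilde\nabla_X D)(Y,Z) - (\tilde\nabla_Y D)(X,Z) + D\bigl(X, D(Y,Z)\bigr) - D\bigl(Y, D(X,Z)\bigr),
\end{align*}
so that $\bigl| R^g - R^{\tilde g} \bigr|_{\tilde g} \le C(n)\bigl( \vert \tilde\nabla D \vert_{\tilde g} + \vert D \vert_{\tilde g}^2 \bigr) \le C(n)\bigl( \vert \tilde\nabla D \vert_{\tilde g} + \eps^2 \bigr)$. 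The quadratic term is harmless for $\eps \le 1$, and the whole proof comes down to the estimate $\vert \tilde\nabla D \vert_{\tilde g} \le C(n)\eps$.

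This last estimate is the one that genuinely uses the smoothing operator $S_\eps$ rather than only its three tabulated consequences. By the Koszul formula $D$ is an algebraic expression in $\tilde g^{-1}$ and $\tilde\nabla g$, so $\tilde\nabla D$ is controlled — up to terms quadratic in $\tilde\nabla g$, hence $O(\eps^2)$ — by $\tilde\nabla^2 g$; one therefore needs $\vert \tilde\nabla^2 g \vert_{\tilde g} \le C(n)\eps$, equivalently $C^1$-closeness of the Levi-Civita connections, $\vert \tilde\nabla(\nabla^g - \tilde\nabla) \vert_{\tilde g} \le C(n)\eps$. The same regularization mechanism that produces the uniform higher-derivative bounds $\vert \tilde\nabla^j \tilde R \vert < A_j(n,\eps)$ — when fed the curvature bound $\vert \sec^g \vert \le 1$ and the first-order closeness $\vert \nabla^g - \tilde\nabla \vert < \eps$ — also propagates the first-order closeness of the metrics to first-order closeness of their connections, which is precisely this estimate. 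Granting it, the previous two paragraphs give $\bigl| \sec^g(\sigma) - \sec^{\tilde g}(\sigma) \bigr| \le C(n)\eps$ for every $\sigma$, as required.

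I expect the main obstacle to lie exactly in this last estimate: the metric comparison and the first-order bound on the connections do not by themselves control second derivatives of $\tilde g - g$, and the pointwise difference $\vert R^g - R^{\tilde g} \vert$ is a priori only bounded, not small. To close the gap one must either invoke a sharpened form of Abresch's smoothing theorem recording the $C^1$-closeness of $\nabla^g$ and $\tilde\nabla$, or rework the relevant part of its proof to extract $\vert \tilde\nabla^2 g \vert_{\tilde g} \le C(n)\eps$ from the elliptic/parabolic equation defining $S_\eps(g)$; everything else is formal.
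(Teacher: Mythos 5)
The paper does not prove Proposition \ref{RongSec}; it is quoted from \cite{RongSectionalCurv}, so your attempt must be measured against Rong's argument. The problem with your proposal is not merely the gap you flag at the end --- the reduction to the pointwise estimate $\vert \sec^g(\sigma) - \sec^{\tilde g}(\sigma) \vert \le C(n)\eps$ aims at a statement that is false in this generality. Consider a metric with $\vert \sec^g \vert \le 1$ whose sectional curvature oscillates between values near $-1$ and $+1$ on a length scale much smaller than the smoothing scale: since $\tilde g = S_\eps(g)$ satisfies $\vert \tilde\nabla \tilde R \vert \le A_1(n,\eps)$, the curvature of $\tilde g$ cannot follow these oscillations, so at most points $\sec^g(\sigma)$ and $\sec^{\tilde g}(\sigma)$ differ by an amount of order $1$, not $\eps$. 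The same example kills the crux estimate you isolate, $\vert \tilde\nabla D \vert \le C(n)\eps$, i.e.\ $C^2$-closeness of $g$ and $\tilde g$: no smoothing operator can keep the metric $O(\eps)$-close in $C^2$ to an input whose curvature oscillates below the smoothing scale while simultaneously producing the derivative bounds $\vert \tilde\nabla^j \tilde R \vert \le A_j$. Theorem \ref{SmoothMetric} delivers $C^1$-closeness plus \emph{boundedness} (not smallness) of $\tilde R$ and its derivatives, and that is the most one can have; so the missing step is not ``a sharpened form of Abresch's theorem'' waiting to be invoked, it is an estimate that does not hold, and the pointwise strategy has to be abandoned. (Your curvature-difference identity and the reduction of the min/max statement to the pointwise one are both correct as algebra; they are just aimed at the wrong target.)

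The min/max formulation is the whole point of the proposition, and it is proved by a different mechanism. The smoothing is realized by a geometric flow (Ricci flow in the sense of Shi/Bando, which underlies Abresch's operator); the curvature satisfies a reaction--diffusion equation $\partial_t R = \Delta R + Q(R)$ with $\vert Q(R)\vert \le C(n)$ as long as the curvature stays bounded by, say, $2$; and the scalar maximum principle applied to the extremal sectional curvatures shows that $\max\sec$ can increase, and $\min\sec$ decrease, by at most $C(n)t$ up to time $t \sim \eps$. This controls the global extrema of the two curvatures relative to one another with no pointwise comparison of curvature tensors at all, which is exactly why only $\min\sec$ and $\max\sec$ appear in the statement. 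Note also that this argument yields $\min\sec^{g} - C(n)\eps \le \sec^{\tilde g} \le \max\sec^{g} + C(n)\eps$, which is the direction actually used in the proof of Lemma \ref{SimplifyMain} to bound $\vert \widehat{\sec}^{M_i}\vert$; the display in Proposition \ref{RongSec} appears to have $g$ and $\tilde g$ transposed, and you should prove it in the former direction.
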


At this point, we introduce the following notion: A Riemannian manifold $(M,g)$ is called \textit{$A$-regular} if there is a sequence $A =(A_j)_{j \in \bbN}$ of nonnegative numbers such that $\vert \nabla^j R \vert \leq A_j$ for all $j \geq 0$. Furthermore, $C(A)$ will denote a constant depending on $A_j$ for finitely many $j \geq 0$.

Assuming the manifold to be  $A$-regular, Cheeger, Fukaya and Gromov proved the existence of invariant metrics, compare \cite[Section 7, Section 8]{CheegerFukayaGromov}.

\begin{thm}[Cheeger-Fukaya-Gromov]\label{invMetric}
Let $(M,g) \in \mfdspace$  be an $A$-regular Riemannian manifold such that there is a lower dimensional metric space $Y$ with $\dGH(M,Y) \leq \eps(n,D)$, where $\eps(n,D)$ is the constant in Theorem \ref{FukayaFiber}. Then there is an invariant metric $\tilde{g}$ such that
\begin{align*}
\vert \nabla^j(g - \tilde{g}) \vert \leq c(n,A,j) \dGH(M,Y).
\end{align*}
Here $\eps(n,D)$ is the constant from Theorem \ref{FukayaFiber}. Furthermore, the map $\tilde{\eta}: FM \rightarrow \tilde{Y}$ is a Riemannian submersion with respect to the metric induced by $\tilde{g}$ such that the second fundamental form of the fibers is bounded by a positive constant $C(n,A)$.
\end{thm}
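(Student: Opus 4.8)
The plan is to realize the invariant metric by averaging $g$ against the local nilpotent structure supplied by Theorem~\ref{FukayaFiber}, using $A$-regularity to keep the averaged metric $C^k$-close to $g$. First I would pass to the frame bundle $FM$ equipped with its canonical metric $g^{FM}$, built from the horizontal--vertical splitting determined by the Levi-Civita connection together with a fixed bi-invariant metric on the $O(n)$-fibers. This metric is again $A$-regular, with constants controlled by $A$, it is $O(n)$-invariant, and $M=FM/O(n)$ isometrically. By Theorem~\ref{FukayaFiber} the projection $\tilde\eta\colon FM\to\tilde Y$ is a fiber bundle whose fibers $F=N/\Gamma$ are pure nilmanifolds, and the $N$-structure furnishes a sheaf $\mathfrak n$ whose local sections are the right-invariant vector fields tangent to the fibers. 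The bound on $\dGH(M,Y)$ forces these fibers to have diameter $O(\dGH(M,Y))$, so that the sheaf sections, while not Killing for $g^{FM}$, have deformation tensors $L_X g^{FM}$ that are $C^k$-small with smallness governed by $A$ and $\dGH(M,Y)$. The whole point is to perturb $g^{FM}$ into a metric for which these fields are genuinely Killing.

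Second, I would carry out the averaging. On a local chart adapted to $\tilde\eta$ the fiber is the compact nilmanifold $N/\Gamma$, and I would lift to the simply connected nilpotent group $N$ and average the metric over the orbit of the $N$-action. Since $N$ is noncompact this average is taken against the Haar measure of the compact quotient $N/\Gamma$, equivalently performed stepwise along the lower central series of $\mathfrak n$ by averaging over the successive torus factors; this produces a symmetric positive-definite $2$-tensor, positivity being automatic because the $N$-translates of $g^{FM}$ all lie in the convex cone of metrics and are mutually $C^0$-close. Doing this $O(n)$-equivariantly, and patching the local averages by a partition of unity subordinate to the chart cover, yields a metric $h$ on $FM$ that is invariant under every local section of $\mathfrak n$ and under $O(n)$; hence $h$ descends to a metric $\tilde g$ on $M$ whose lift makes the sheaf sections Killing, i.e. $\tilde g$ is invariant in the required sense.

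Third come the estimates. Averaging moves points only within a single fiber, hence by a distance $O(\dGH(M,Y))$; since $A$-regularity bounds all covariant derivatives of $g^{FM}$ and of the generating flows, each translate $\phi^*g^{FM}$ differs from $g^{FM}$ by $O(\dGH(M,Y))$ in every $C^k$-norm, and so does their average. The partition-of-unity patching contributes only lower-order overlap errors because the local models already agree to first order, so altogether $|\nabla^j(g^{FM}-h)|\le c(n,A,j)\,\dGH(M,Y)$, which passes to $M$ to give the stated bound on $|\nabla^j(g-\tilde g)|$. For the submersion claim, with respect to $h$ the horizontal distribution is the $\tilde g$-orthogonal complement of the fibers, and by construction $\tilde\eta$ preserves horizontal lengths, so it is a Riemannian submersion; the second fundamental form of a fiber is a first-order quantity in the invariant metric, controlled by the structure constants of $\mathfrak n$ and the derivative bounds, giving $|\II|\le C(n,A)$, bounded though not small since the nilmanifold fibers may genuinely be curved.

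The hard part will be the globalization together with simultaneous control of all $C^k$-norms. The nilpotent structure is only a \emph{sheaf}: the local groups $N$ are glued by the affine structure group, so the naive fiberwise average need not be globally well defined, and making the averaging $O(n)$-equivariant while respecting these gluings is delicate. This is exactly where $A$-regularity and the passage to the frame bundle are indispensable, the former giving uniform higher-order bounds that survive averaging and patching, the latter turning the infranil fibers into pure nilmanifolds carrying an honest local nilpotent group action to average over. Controlling the patching error to the \emph{linear} order $\dGH(M,Y)$ in every derivative, rather than merely in $C^0$, is the most technical point and is the reason the construction must be organized along the lower central series rather than performed in one stroke.
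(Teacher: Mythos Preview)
The paper does not give its own proof of this theorem: it is stated as a result of Cheeger--Fukaya--Gromov, with a reference to \cite[Section~7, Section~8]{CheegerFukayaGromov}, and is then used as a black box in the remainder of the argument. There is therefore nothing in the paper to compare your proposal against.

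That said, your sketch is a faithful outline of the actual Cheeger--Fukaya--Gromov argument: one passes to the frame bundle to obtain pure nilmanifold fibers, averages the metric over the local nilpotent action (organized along the lower central series so that each step is an honest torus average), uses $A$-regularity to control all $C^k$-norms of the perturbation linearly in the fiber diameter, and descends $O(n)$-equivariantly. Your identification of the delicate point---that the $N$-structure is only a sheaf, so the local averages must be patched compatibly while preserving the higher-order estimates---is exactly where the technical work in \cite{CheegerFukayaGromov} lies. For the purposes of this paper no more is needed than the statement and the citation.
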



\section{The injectivity radius of the fiber}

The goal of this section is to prove Proposition \ref{injFiber}. Therefore, we consider a Riemannian submersion $\eta: M \rightarrow Y$. Henceforth, denotes the fiber over $p \in Y$ by $F_p \coloneqq \eta^{-1}(\lbrace p \rbrace )$  and $k \coloneqq \dim(F_p).$ 

Recall, that a Riemannian submersion is \textit{bounded} if the fundamental tensors $A$ and $T$ are bounded in norm by positive constants $C_A$ resp.\ $C_T$.

The main ingredient of the proof of Proposition \ref{injFiber} is a homotopy with fixed endpoints between a curve $\gamma$ with endpoints in a fiber $F_p$ and a curve $\tilde{\gamma}$ lying completely in the fiber $F_p$ such that the length of $\tilde{\gamma}$ is bounded from above linear in terms of $l(\gamma)$. Such a homotopy was constructed in the proof of Theorem 3.1 in \cite{Tapp}.  

\begin{prop}[Tapp]\label{HomotopCurve}
Let $\eta: M \rightarrow Y$ be a bounded Riemannian submersion with $Y$ being  compact and simply-connected. Then there exists a positive constant $C \coloneqq C(Y,k,C_T, C_A)$ such that any curve $\gamma$ in $M$ with $\eta \circ \gamma$ being a contractible loop is homotopic to a curve $\tilde{\gamma}$ in the fiber $F_p$, $p = \eta \circ \gamma (0)$, satisfying
\begin{align*}
l(\tilde{\gamma}) \leq C \, l(\gamma).
\end{align*}
\end{prop}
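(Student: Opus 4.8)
The plan is to lift a suitably controlled null-homotopy of the loop $c \coloneqq \eta \circ \gamma$ in the base $Y$ up to the total space $M$ via horizontal lifts, and then to estimate the length of the resulting curve in the fiber using the bounds $C_A$ and $C_T$ on the fundamental tensors.

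First I would set up the lifting. For a Riemannian submersion the horizontal lift of a curve has the same length as the curve and depends smoothly on parameters, so given a smooth map $h \colon [0,1]^2 \to Y$, $(s,t) \mapsto c_s(t)$, with $c_0 = c$, one builds a lift $(s,t) \mapsto \gamma_s(t)$ with $\gamma_0 = \gamma$ by requiring that, for each fixed $t$, the path $s \mapsto \gamma_s(t)$ be the horizontal lift of $s \mapsto c_s(t)$ through $\gamma(t)$. If the homotopy $h$ keeps the endpoints $t = 0, 1$ fixed at the base point $p$, then $s \mapsto \gamma_s(0)$ and $s \mapsto \gamma_s(1)$ are horizontal lifts of constant paths, hence constant, so $\{\gamma_s\}$ is a homotopy rel endpoints; and if in addition $c_1 \equiv p$, then $\gamma_1$ is a curve lying entirely in $F_p$, so we may set $\tilde\gamma \coloneqq \gamma_1$. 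Since $Y$ is compact and simply connected, $c$ is contractible and such an $h$ exists. From the proof of \cite[Theorem 3.1]{Tapp} one imports the essential quantitative input: $h$ may be chosen with uniform metric control in terms of $Y$ — for loops shorter than the convexity radius this is done inside a single convex ball, and longer loops are first replaced by a geodesic polygon with $O(l(c))$ edges and then contracted using a fixed finite convex cover of $Y$ (whose nerve is homotopy equivalent to the simply connected $Y$, so of bounded combinatorial complexity) — resulting in intermediate curves $c_s$ of speed at most $\Lambda(Y)\, l(c)$ at every parameter and in every point of $c$ being displaced a total distance at most $\Lambda(Y)$ during the homotopy.

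It then remains to bound $l(\tilde\gamma)$; by a reparametrization (which changes neither the hypotheses nor the lengths) we may assume $c$ has constant speed. Put $W(s,t) = \partial_t \gamma_s(t)$ and $V(s,t) = \partial_s \gamma_s(t)$. By construction $V$ is horizontal with $\lvert V \rvert = \lvert \partial_s c_s \rvert$, while the horizontal part $W^h$ of $W$ projects isometrically onto $\partial_t c_s$, so $\lvert W \rvert^2 = \lvert \partial_t c_s \rvert^2 + \lvert W^v \rvert^2$, with $W^v$ the vertical part. Using the symmetry $\nabla_s W = \nabla_t V$ together with the O'Neill identities — for $X$ horizontal, $\mathcal V \nabla_X V = A_X V$ with $\lvert A_X V \rvert \le C_A \lvert X \rvert \lvert V \rvert$, and for $U$ vertical and $e$ a unit vertical field, $\lvert \mathcal H \nabla_U e \rvert$ controlled by $C_T$ — a direct computation gives a differential inequality
\begin{align*}
\partial_s \lvert W^v(s,t) \rvert \;\le\; c(k)\,\big( C_A\, \lvert \partial_t c_s(t) \rvert + C_T\, \lvert W^v(s,t) \rvert \big)\, \lvert V(s,t) \rvert
\end{align*}
with $c(k)$ a dimensional constant. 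Since $c_1 \equiv p$ one has $\lvert W(1,t) \rvert = \lvert W^v(1,t) \rvert$, and $\lvert W^v(0,t) \rvert \le \lvert \gamma'(t) \rvert$. Gronwall's inequality in $s$, together with $\int_0^1 \lvert V(s,t) \rvert\, ds \le \Lambda(Y)$ (the displacement bound), $\sup_s \lvert \partial_t c_s(t) \rvert \le \Lambda(Y)\, l(c)$, and $l(c) \le l(\gamma)$, yields $\lvert W(1,t) \rvert \le C(Y, k, C_A, C_T)\,\big( \lvert \gamma'(t) \rvert + l(\gamma) \big)$ uniformly in $t$. Integrating over $t \in [0,1]$ gives $l(\tilde\gamma) = \int_0^1 \lvert W(1,t) \rvert\, dt \le C(Y, k, C_A, C_T)\, l(\gamma)$, as claimed.

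The step I expect to be the real obstacle is the one taken from \cite[Theorem 3.1]{Tapp}: showing that an arbitrary loop $c$ in the compact, simply connected $Y$ admits a null-homotopy whose intermediate curves have speed $\lesssim l(c)$ and along which every point moves only a bounded total distance, with the implied constants depending only on $Y$. This is precisely where compactness and simple-connectedness of $Y$ are used in an essential way. Granting it, the length estimate for $\tilde\gamma$ reduces to the O'Neill-tensor computation and Gronwall's inequality sketched above.
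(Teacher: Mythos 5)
Your argument is correct in outline, but it is organized differently from the proof the paper is leaning on. The paper does not reprove this proposition: it cites the construction in the proof of \cite[Theorem 3.1]{Tapp}, and later (in the proof of Proposition \ref{injFiber}) dissects exactly that construction, in which $\tilde{\gamma}$ is the concatenation $\beta_1 \ast \beta_2$ of the track $s \mapsto h^{\alpha_s}(x)$ of the holonomy diffeomorphisms of the shrinking loops $\alpha_s$ (whose length is controlled by the ODE \eqref{ODE} for $\rho_l$) and a vertical path controlled by the Lipschitz constant $e^{C_T l(\alpha)}$ of $h^{\alpha}$. You instead lift the whole null-homotopy in one sweep, taking $s \mapsto \gamma_s(t)$ to be the horizontal lift of $s \mapsto c_s(t)$, and run a single Gronwall estimate on $\lvert W^v \rvert$ using $\nabla_s W = \nabla_t V$ and the O'Neill identities; this produces a different curve $\tilde{\gamma} = \gamma_1$ but a constant of the same shape, $e^{c\,C_T \Lambda}\bigl(1 + c\,C_A \Lambda^2\bigr)$, and it correctly yields a homotopy rel endpoints (since $s \mapsto \gamma_s(0)$ and $s \mapsto \gamma_s(1)$ lift constant paths). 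Both routes stand or fall on the same nontrivial input, which you rightly identify and import rather than prove: the existence of a null-homotopy in the compact, simply connected base whose $t$-derivative is $O(l(c))$ and whose per-point displacement is $O(1)$, i.e.\ \cite[Lemma 7.2]{Tapp} (for short loops, Lemma \ref{NullHomotopConst} of the paper supplies it explicitly). Two small points to watch: first, your differential inequality for $\partial_s \lvert W^v \rvert$ should also account for the covariant derivative of the vertical projection along the horizontal direction $V$, which contributes additional $A$- and $T$-terms; these are absorbed into your constants and do not change the conclusion. Second, the paper ultimately needs more than the bare statement proved here, namely that $C = 1 + \tau(l(\gamma) \mid C_A, C_T, k, K)$; your bound does degenerate to $1 + o(1)$ as $l(\gamma) \to 0$ once the explicit bounds of Lemma \ref{NullHomotopConst} (under which both $\sup \lvert \partial_s H \rvert$ and $\sup \lvert \partial_t H \rvert$ vanish with $l(\alpha)$) are substituted, so your approach could replace Tapp's in the later asymptotic analysis, but that bookkeeping would have to be redone from your Gronwall inequality rather than read off from \eqref{BoundtheODE}.
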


Comparing the assumptions of Proposition \ref{HomotopCurve} with those of Proposition \ref{injFiber} there are a few differences. First, in Proposition \ref{HomotopCurve}, Tapp requires $Y$ to be compact and simply-connected. These assumptions are needed to guarantee that for any loop $\alpha: [0,1] \rightarrow Y$ there is is a nullhomotopy $H:[0,1] \times [0,1] \rightarrow Y$, i.e. $H(1,t) = \alpha(t)$, $H(0,t) = \alpha(0)$ and $H(s,0)= H(s,1)= \alpha(0)$ for all $s \in [0,1]$, whose derivatives are uniformly bounded, c.f.\ \cite[Lemma 7.2]{Tapp}. 

Going back to the statement of Proposition \ref{injFiber} the assumptions therein imply that the considered noncontractible geodesic loop $\gamma$ based at $x \in F_p$ has length $l(\gamma) = 2 \inj^M(x) < \frac{1}{2} \inj^Y(p)$. Therefore, the loop $\eta \circ \gamma$ is contractible in $Y$. Furthermore, by assuming a bound on the sectional curvature of $Y$ there is a nullhomotopy for curves with length less or equal $\frac{1}{4}\inj^Y(p))$ whose derivatives can be bounded as follows:

\begin{lem}\label{NullHomotopConst}
Let $Y$ be a Riemannian manifold with $- \lambda^2 \leq  \sec^Y  \leq \Lambda^2$ for some $\lambda, \Lambda > 0$. Furthermore, let $\alpha:[0,1] \rightarrow Y$ be a loop in $Y$ based at $p$ and $l(\alpha) < \min \lbrace \frac{2\pi}{\Lambda} , \frac{1}{2} \inj^Y(p) \rbrace$. Then there is a piecewise smooth nullhomotopy $H: [0,1] \times [0,1] \rightarrow Y$, i.e.\ $H(0,t)= p$ and $H(1,t) = \alpha(t)$ and $H(s,0) = H(s,1)=p$ for all $s \in [0,1]$, such that 
\begin{align*}
\left\vert \frac{\partial}{\partial t} H \right\vert &\leq \frac{\Lambda}{\lambda} \cdot \frac{\sinh(\lambda \; \frac{l(\alpha)}{2})}{\sin(\Lambda \; \frac{l(\alpha)}{2})} \cdot l(\alpha) , \\
\left\vert \frac{\partial}{\partial s} H \right\vert &\leq \frac{\sinh(\lambda \; \frac{l(\alpha)}{2})}{\lambda \frac{l(\alpha)}{2}} \cdot \frac{l(\alpha)}{2}.
\end{align*}
for all $s$, $t \in [0,1]$.
\end{lem}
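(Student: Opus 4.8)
The plan is to build the nullhomotopy explicitly by connecting each point $\alpha(t)$ to the basepoint $p$ by the unique minimizing geodesic, and then estimate the two partial derivatives using standard Jacobi field comparison. Concretely, for each $t \in [0,1]$ let $c_t : [0,1] \to Y$ be the constant-speed minimizing geodesic from $p$ to $\alpha(t)$; this is well-defined and depends piecewise smoothly on $t$ because $l(\alpha) < \tfrac12 \inj^Y(p)$ forces every point $\alpha(t)$ to lie inside the ball $B_{\inj^Y(p)/2}(p)$, where the exponential map at $p$ is a diffeomorphism. Set $H(s,t) \coloneqq c_t(s) = \exp_p(s\, v(t))$, where $v(t) \coloneqq \exp_p^{-1}(\alpha(t))$. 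Then $H(0,t) = p$, $H(1,t) = \alpha(t)$, and since $\alpha$ is a loop based at $p$ we have $v(0) = v(1) = 0$, hence $H(s,0) = H(s,1) = p$; so $H$ is a nullhomotopy of the required form.

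Next I estimate $\partial_s H$. We have $\partial_s H(s,t) = c_t'(s)$, which has constant norm $|v(t)| = d(p,\alpha(t)) \le l(\alpha)/2$ along each geodesic $c_t$. To turn this into the stated bound I use that $s \mapsto H(s,t)$ is the $J$-field-free radial direction, but the cleaner route is: the curve $t \mapsto H(s,t)$ for fixed $s$ has a length one can control, and $|\partial_s H| = d(p,\alpha(t))$ is already $\le l(\alpha)/2$. The factor $\tfrac{\sinh(\lambda l(\alpha)/2)}{\lambda l(\alpha)/2}$ — which is $\ge 1$ — then appears simply because the asserted inequality is weaker than $|\partial_s H| \le l(\alpha)/2$; so this bound is immediate once the construction is in place. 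The substantive estimate is on $\partial_t H$. For fixed $s$, the vector field $t \mapsto J_t(s) \coloneqq \partial_t H(s,t)$ along the geodesic $c_t$ is the Jacobi field with $J_t(0) = 0$ and $J_t(1) = \alpha'(t)$. By Rauch comparison (upper curvature bound $\Lambda^2$, using $l(c_t) = |v(t)| \le l(\alpha)/2 < \pi/\Lambda$ so that $\sin(\Lambda s |v(t)|)$ stays positive), a Jacobi field vanishing at $s=0$ satisfies $|J_t(s)| \le \frac{\sin(\Lambda s |v(t)|)}{\sin(\Lambda |v(t)|)}\, |J_t(1)|$ is \emph{false} in that direction — the correct comparison for an upper curvature bound gives a lower bound of this shape, so I instead invoke the lower curvature bound $-\lambda^2$: a Jacobi field with $J_t(0)=0$ satisfies $|J_t(s)| \le \frac{\sinh(\lambda s |v(t)|)}{\sinh(\lambda |v(t)|)}\,|J_t(1)|$, hence $|\partial_t H(s,t)| \le \frac{\sinh(\lambda |v(t)|)}{\sinh(\lambda |v(t)|)}|\alpha'(t)| \le |\alpha'(t)|$ after maximizing over $s\in[0,1]$; parametrizing $\alpha$ proportionally to arclength gives $|\alpha'(t)| = l(\alpha)$. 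The factor $\frac{\Lambda}{\lambda}\cdot\frac{\sinh(\lambda l(\alpha)/2)}{\sin(\Lambda l(\alpha)/2)}$ then enters because one must compare $|\alpha'(t)|$ with the initial data $|\partial_s\partial_t H(0,t)| = |v'(t)|$ of the Jacobi field and convert between the two using Rauch in both directions, bounding $|v'(t)|$ in terms of $|\alpha'(t)|$ via the upper curvature bound and then the Jacobi field growth via the lower bound; tracking the constants through both comparisons produces exactly the stated quotient.

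The main obstacle is therefore the bookkeeping of the two-sided Rauch comparison: one has to be careful about which endpoint the Jacobi field is prescribed at, and that the roles of $\sinh(\lambda\,\cdot)$ (from the lower curvature bound, controlling growth of a Jacobi field vanishing at an endpoint) and $\sin(\Lambda\,\cdot)$ (from the upper curvature bound, controlling the derivative of $\exp_p$ and hence the passage between $\alpha'$ and $v' = (d\exp_p)^{-1}\alpha'$) are not interchanged. The hypotheses $l(\alpha) < 2\pi/\Lambda$ and $l(\alpha) < \tfrac12 \inj^Y(p)$ are exactly what is needed to keep $\sin(\Lambda l(\alpha)/2) > 0$ and to ensure the minimizing geodesics $c_t$ exist and vary smoothly, so no further smallness assumption is required; away from the finitely many $t$ where $\alpha$ may fail to be smooth, $H$ is smooth, giving the piecewise-smooth conclusion.
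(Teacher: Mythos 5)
Your construction is exactly the paper's: lift $\alpha$ to $v(t)=\exp_p^{-1}(\alpha(t))$, take the linear homotopy $s\,v(t)$ in $T_pY$, and push it down with $\exp_p$. The $\partial_s$ estimate is fine (in fact sharper than needed: by the Gauss lemma $\vert\partial_s H\vert=\vert v(t)\vert\leq l(\alpha)/2$ exactly, and the stated bound is weaker since $\sinh(x)/x\geq 1$).

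The problem is in the $\partial_t$ estimate, which is the only substantive one. The Jacobi field comparison you invoke is reversed: for a Jacobi field along $c_t$ with $J_t(0)=0$, the lower curvature bound $\sec\geq-\lambda^2$ makes $s\mapsto\vert J_t(s)\vert/\sinh(\lambda s\vert v\vert)$ \emph{non-increasing}, which yields $\vert J_t(s)\vert\geq\frac{\sinh(\lambda s\vert v\vert)}{\sinh(\lambda\vert v\vert)}\vert J_t(1)\vert$ for $s\leq 1$ --- a lower bound, not the upper bound you wrote. Consequently the intermediate conclusion $\vert\partial_t H(s,t)\vert\leq\vert\alpha'(t)\vert$ does not follow, and it is false in general: on a sphere of curvature $\Lambda^2$ with $\vert v\vert$ close to $\pi/\Lambda$, one has $\vert J_t(1)\vert$ near zero while $\vert J_t(1/2)\vert$ is of order $\vert v'\vert/(\Lambda\vert v\vert)$. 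The correct route --- which your closing sentence gestures at but does not carry out, and which is what the paper does --- is the two-step composition: first $\vert v'(t)\vert=\vert(\mathrm{D}\exp_p)^{-1}\alpha'(t)\vert\leq\frac{\Lambda\vert v\vert}{\sin(\Lambda\vert v\vert)}\vert\alpha'(t)\vert$ from the upper curvature bound (this is where $l(\alpha)<2\pi/\Lambda$ is used to keep $\sin(\Lambda\vert v\vert)>0$), and then
\begin{align*}
\vert\partial_t H(s,t)\vert=\vert\mathrm{D}_{s v(t)}\exp_p\big(s\,v'(t)\big)\vert\leq\frac{\sinh(\lambda s\vert v\vert)}{\lambda s\vert v\vert}\,s\,\vert v'(t)\vert\leq\frac{\sinh(\lambda\vert v\vert)}{\lambda\vert v\vert}\,\vert v'(t)\vert
\end{align*}
from the lower curvature bound, using that $\sinh(x)/x$ is increasing. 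Multiplying the two factors and using $\vert v\vert\leq l(\alpha)/2$, $\vert\alpha'(t)\vert=l(\alpha)$ gives precisely $\frac{\Lambda}{\lambda}\cdot\frac{\sinh(\lambda\,l(\alpha)/2)}{\sin(\Lambda\,l(\alpha)/2)}\cdot l(\alpha)$. As written, your proof asserts the key bound via a comparison that points the wrong way and defers the actual derivation to ``tracking the constants,'' so the estimate needs to be replaced by this explicit computation.
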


\begin{proof}
Let $\alpha$ be parametrized proportional to arclength. Since $\alpha$ satisfies $l(\alpha) < \frac{1}{2} \inj^Y(p),$ it lifts to a loop $\tilde{\alpha} \coloneqq \exp_p^{-1} \circ \, \alpha$ in $T_pY$. 

Define $\tilde{H}(s,t) \coloneqq s \cdot \tilde{\alpha}(t)$ with $s$, $t \in [0,1]$. Clearly, we have that 
\begin{align*}
\left\vert \frac{\partial}{\partial s} \tilde{H} \right\vert = \vert \tilde{\alpha} \vert \leq \frac{l(\alpha)}{2}.
\end{align*}  
To estimate $\left\vert \frac{\partial}{\partial t} \tilde{H} \right\vert$ we first observe, that as $- \lambda^2 \leq \sec^Y \leq \Lambda^2$ it follows that 
\begin{align*}
\frac{\sin(\Lambda \vert v \vert)}{\Lambda \vert v \vert} \vert w \vert \leq \vert (\mathrm{D}_{v} \exp_p) (w) \vert \leq \frac{\sinh(\lambda \vert v \vert)}{\lambda \vert v \vert} \vert w \vert
\end{align*}
for all $v \in T_pY$ with $\vert v \vert < \frac{\pi}{\lambda}$ and $w \in T_v T_p Y$, see e.g.\ \cite[Corollary 4.6.1]{Jost}. Therefore, we obtain for $q \in B_{\frac{1}{4} \inj^Y(p)}(p)$ and $u \in T_q Y$ that
\begin{align*}
\vert ( \mathrm{D}_q \exp_p^{-1} ) u \vert \leq \frac{\Lambda \vert \exp^{-1} (q) \vert}{\sin( \Lambda \vert \exp^{-1} (q) \vert )} \vert u \vert.
\end{align*}
Thus,
\begin{align*}
\left\vert \frac{\partial}{\partial t} \tilde{H} \right\vert \leq \left\vert \frac{\partial}{\partial t} \tilde{\alpha} \right\vert \leq  \frac{\Lambda \vert \tilde{\alpha} \vert}{\sin(\Lambda \vert \tilde{\alpha} \vert)} \left\vert \frac{\partial}{\partial t} \alpha \right\vert \leq \frac{\Lambda \frac{l(\alpha)}{2}}{\sin(\Lambda \; \frac{l(\alpha)}{2})} \cdot l(\alpha)
\end{align*}

By construction $H \coloneqq \exp_p (\tilde{H})$ is a piecewise smooth nullhomotopy of $\alpha$ in $Y$ such that
\begin{align*}
\left\vert \frac{\partial}{\partial s} H \right\vert \leq \frac{\sinh(\lambda \vert \tilde{H}(s,t) \vert)}{\lambda \vert \tilde{H}(s,t) \vert} \, \left\vert \frac{\partial}{\partial s} \tilde{H} \right\vert \leq \frac{\sinh(\lambda \; \frac{l(\alpha)}{2} )}{\lambda \frac{l(\alpha)}{2}} \, \cdot \frac{l(\alpha)}{2}.
\end{align*}
The corresponding bound on $\left\vert \frac{\partial}{\partial t} H \right\vert$ is derived similarly.
\end{proof}

The next corollary follows immediately by adjusting the bounds on the derivative of the exponential map.

\begin{cor}
Let $Y$ be a Riemannian manifold with $- \lambda^2 \leq \sec^Y \leq - \Lambda^2$ for some $\lambda \geq \Lambda \geq 0$. Furthermore, let $\alpha : [0,1] \rightarrow Y$ be a loop in $Y$ based at $p$ and $l(\alpha) < \frac{1}{2} \inj^Y(p)$. Then there is a piecewise smooth nullhomotopy $H:[0,1] \times [0,1] \rightarrow Y$, as before, such that
\begin{align*}
\left\vert \frac{\partial}{\partial t} H \right\vert &\leq \frac{\Lambda}{\lambda} \cdot \frac{\sinh(\lambda \; \frac{l(\alpha)}{2})}{\sinh(\Lambda \; \frac{l(\alpha)}{2})} \cdot l(\alpha) , \\
\left\vert \frac{\partial}{\partial s} H \right\vert &\leq \frac{\sinh(\lambda \; \frac{l(\alpha)}{2})}{\lambda \frac{l(\alpha)}{2}} \cdot \frac{l(\alpha)}{2}.
\end{align*}
In the case of $\Lambda=0$, we set $\frac{\sinh(\Lambda)}{\Lambda} = 1$.
\end{cor}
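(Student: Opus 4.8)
The plan is to repeat the proof of Lemma~\ref{NullHomotopConst} almost verbatim, the only change being that the two-sided curvature bound is now $-\lambda^2 \le \sec^Y \le -\Lambda^2$, so that the Rauch comparison estimates for the differential of the exponential map involve $\sinh$ on both sides instead of $\sin$ on the upper side. First I would observe that since $\sec^Y \le -\Lambda^2 \le 0$ the map $\exp_p$ has no critical points, so the only role of the hypothesis $l(\alpha) < \tfrac12 \inj^Y(p)$ is, as before, to guarantee that $\alpha$ lifts to the loop $\tilde\alpha := \exp_p^{-1}\circ\,\alpha$ in $T_pY$; no upper bound on $l(\alpha)$ in terms of $\Lambda$ is required. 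One then sets $\tilde H(s,t) := s\cdot\tilde\alpha(t)$ exactly as before, so that $\left\vert \frac{\partial}{\partial s}\tilde H \right\vert = \vert\tilde\alpha\vert \le \frac{l(\alpha)}{2}$ is unchanged.

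The one substantive modification is the comparison estimate: for $-\lambda^2 \le \sec^Y \le -\Lambda^2$ with $\lambda \ge \Lambda \ge 0$ one has, by Rauch comparison with the space forms of curvature $-\lambda^2$ and $-\Lambda^2$ (see e.g.\ \cite[Corollary 4.6.1]{Jost}),
\begin{align*}
\frac{\sinh(\Lambda \vert v \vert)}{\Lambda \vert v \vert}\,\vert w \vert \le \vert (\mathrm{D}_v \exp_p)(w) \vert \le \frac{\sinh(\lambda \vert v \vert)}{\lambda \vert v \vert}\,\vert w \vert
\end{align*}
for all $v \in T_pY$ and $w \in T_v T_p Y$, with the convention $\frac{\sinh(\Lambda \vert v \vert)}{\Lambda \vert v \vert} = 1$ when $\Lambda = 0$. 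Inverting the lower bound gives $\vert (\mathrm{D}_q \exp_p^{-1}) u \vert \le \frac{\Lambda \vert \exp_p^{-1}(q) \vert}{\sinh(\Lambda \vert \exp_p^{-1}(q) \vert)}\,\vert u \vert$, whence
\begin{align*}
\left\vert \frac{\partial}{\partial t}\tilde H \right\vert \le \left\vert \frac{\partial}{\partial t}\tilde\alpha \right\vert \le \frac{\Lambda \vert \tilde\alpha \vert}{\sinh(\Lambda \vert \tilde\alpha \vert)}\left\vert \frac{\partial}{\partial t}\alpha \right\vert \le \frac{\Lambda\,\frac{l(\alpha)}{2}}{\sinh(\Lambda\,\frac{l(\alpha)}{2})}\cdot l(\alpha).
\end{align*}

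Finally, setting $H := \exp_p(\tilde H)$ and applying the upper comparison bound for $\vert \mathrm{D}\exp_p \vert$ along $\tilde H$ together with $\vert \tilde H(s,t) \vert \le \frac{l(\alpha)}{2}$ yields
\begin{align*}
\left\vert \frac{\partial}{\partial s}H \right\vert &\le \frac{\sinh(\lambda\,\frac{l(\alpha)}{2})}{\lambda\,\frac{l(\alpha)}{2}}\cdot\frac{l(\alpha)}{2}, \\
\left\vert \frac{\partial}{\partial t}H \right\vert &\le \frac{\sinh(\lambda\,\frac{l(\alpha)}{2})}{\lambda\,\frac{l(\alpha)}{2}}\cdot\frac{\Lambda\,\frac{l(\alpha)}{2}}{\sinh(\Lambda\,\frac{l(\alpha)}{2})}\cdot l(\alpha) = \frac{\Lambda}{\lambda}\cdot\frac{\sinh(\lambda\,\frac{l(\alpha)}{2})}{\sinh(\Lambda\,\frac{l(\alpha)}{2})}\cdot l(\alpha),
\end{align*}
which are exactly the claimed bounds. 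There is no genuine obstacle here; the only things to keep track of are that the negative upper curvature bound turns the $\sin$ of Lemma~\ref{NullHomotopConst} into $\sinh$ (and removes the need for any upper length restriction, since negatively curved manifolds have no conjugate points), and that the stated convention at $\Lambda = 0$ is consistent because $\frac{\sinh(\Lambda x)}{\Lambda x} \to 1$ as $\Lambda \to 0$, so the bounds above remain finite and agree with the limiting $\sec^Y \le 0$ case.
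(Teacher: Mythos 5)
Your proposal follows exactly the route the paper intends (the paper dismisses this corollary with ``follows immediately by adjusting the bounds on the derivative of the exponential map''): rerun the proof of Lemma \ref{NullHomotopConst} with the Rauch estimates $\tfrac{\sinh(\Lambda\vert v\vert)}{\Lambda\vert v\vert}\vert w\vert\leq\vert(\mathrm{D}_v\exp_p)(w)\vert\leq\tfrac{\sinh(\lambda\vert v\vert)}{\lambda\vert v\vert}\vert w\vert$, and your observation that the restriction $l(\alpha)<\tfrac{2\pi}{\Lambda}$ becomes superfluous (no conjugate points when $\sec^Y\leq 0$) is correct and worth stating. The final bounds you obtain are the right ones.

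There is, however, one step in your chain that is written in the wrong direction. You bound $\bigl\vert\tfrac{\partial}{\partial t}\tilde\alpha\bigr\vert\leq\tfrac{\Lambda\vert\tilde\alpha\vert}{\sinh(\Lambda\vert\tilde\alpha\vert)}\,l(\alpha)\leq\tfrac{\Lambda\,l(\alpha)/2}{\sinh(\Lambda\,l(\alpha)/2)}\,l(\alpha)$, implicitly maximizing $x\mapsto\tfrac{\Lambda x}{\sinh(\Lambda x)}$ at $x=l(\alpha)/2$. But this function is \emph{decreasing}, so the last inequality is reversed; in the lemma the analogous step works only because $x\mapsto\tfrac{\Lambda x}{\sin(\Lambda x)}$ is \emph{increasing} on $(0,\pi)$. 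The conclusion is still correct, but to get it you must not split the two comparison factors: writing $a=\vert\tilde\alpha(t)\vert$ one has $\bigl\vert\tfrac{\partial}{\partial t}H(s,t)\bigr\vert\leq\tfrac{\sinh(\lambda s a)}{\lambda s a}\cdot s\cdot\tfrac{\Lambda a}{\sinh(\Lambda a)}\cdot l(\alpha)=\tfrac{\Lambda}{\lambda}\cdot\tfrac{\sinh(\lambda s a)}{\sinh(\Lambda a)}\cdot l(\alpha)\leq\tfrac{\Lambda}{\lambda}\cdot\tfrac{\sinh(\lambda a)}{\sinh(\Lambda a)}\cdot l(\alpha)$, and then one uses that $a\mapsto\tfrac{\sinh(\lambda a)}{\sinh(\Lambda a)}$ is increasing for $\lambda\geq\Lambda$ (since $u\mapsto u\coth(ua)$ is increasing in $u$) to evaluate at $a=l(\alpha)/2$. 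With that repair the argument is complete; the $\bigl\vert\tfrac{\partial}{\partial s}H\bigr\vert$ estimate and the $\Lambda=0$ convention are fine as you have them.
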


Next, we prove Proposition \ref{injFiber}. Therein we keep carefully track of the dependence of the constants on $\inj^M(x)$ because this is the quantity going to $0$ in a collapsing sequence while the other quantities will be uniformly bounded.

\begin{proof}[Proof of Proposition \ref{injFiber}]
As $\inj^M(x) < \frac{\pi}{\sqrt{K}}$ there is a noncontractible geodesic loop $\gamma$ based at $x$ such that $l(\gamma) = 2 \inj^M(x)$. As $\inj^M(x) < \frac{1}{4} \inj^Y(p)$, the composition $\eta \circ \gamma$ is contractible in $Y$. By Proposition \ref{HomotopCurve}, $\gamma$ is homotopic to a noncontractible loop $\tilde{\gamma}$ in the fiber $F_p$ such that $l( \tilde{\gamma} ) \leq C \cdot l(\gamma)$ for a positive constant $C \coloneqq C(Y,k,C_A,C_T)$. Thus,
\begin{align*}
2 \inj^{F_p} \leq l(\tilde{\gamma}) \leq C \cdot l(\gamma) = C \cdot 2 \inj^M(x).
\end{align*} 

We claim that $C = \tau( l(\gamma) \vert C_A, C_T, k, K )$. The proof consists of a careful study of the constant $C$, following the proof of \cite[Theorem 3.1]{Tapp}. In this proof, Tapp modifies the path $\gamma$ such that it is a concatenation of paths with endpoints in the fiber whose length is not larger than $( 2 \diam (Y) + 1)$. As $l(\gamma) < (2 \diam (Y) + 1)$ already holds by assumption we do not need this modification.

Set $\alpha(t) \coloneqq \eta \circ \gamma$. The vertical curve $\tilde{\gamma}$ is the concatenation of the paths $\beta_1$ and $\beta_2$. The first path $\beta_1$ goes from $x$ to $z \coloneqq h^{\alpha}(x)$, where $h^{\alpha}: F_p \rightarrow F_p$ is the holonomy diffeomorphism associated to $\alpha$ and the path $\beta_2$ connects $z$ with $x$ again. Hence, 
\begin{align*}
l(\tilde{\gamma}) \leq l(\beta_1) + l(\beta_2) \leq P \cdot l(\gamma) + L \cdot l(\gamma) = C l(\gamma)
\end{align*}
for some explicit positive constante $P$ and $L$, compare with \cite[p.\ 645]{Tapp}. We will study these constants $P$ and $L$ in detail.

First we consider the inequality $l(\beta_1) \leq P \cdot l(\gamma)$. The constant $P$ is a bound on the derivative of the function $l \mapsto \rho_l (1)$ between $l = 0$ and $l = l(\gamma)$, where $\rho_{l(\gamma)} (t)$ is the solution to the differential equation
\begin{align}\label{ODE}
(\rho_{l(\gamma)})^{\prime}(t) = kC_A  Q_s Q_t l(\gamma) ( 1 + 4^k k!) + k Q_t l ( C_T + 4^k k! C_A) \rho_l(t) \ ; \ \rho_{l(\gamma)}(0) = 0
\end{align}
The constants $Q_t$ and $Q_s$ are the bounds on the nullhomotopy $H$ of the path $\alpha(t)$ in $Y$, i.e.\  $\left\vert \frac{\partial}{\partial t} H \right\vert \leq Q_t l(\gamma)$ and $\left\vert \frac{\partial}{\partial s} H \right\vert \leq Q_s$. uniformly

As, by Gray-O'Neill's formula, $-K \leq \sec^Y \leq ( K + 3C_A^2 )$ and, by assumption, $l(\alpha) \leq l(\gamma) < \min \lbrace \frac{2 \pi}{\sqrt{K + 3C_A^2}}, \frac{1}{2} \inj^Y(p)\rbrace$ we apply Lemma \ref{NullHomotopConst} and obtain
\begin{align*}
Q_t &= \frac{\sqrt{K + 3C_A^2}}{\sqrt{K}} \cdot \frac{\sinh( \sqrt{K} \frac{l(\gamma)}{2} )}{\sin(\sqrt{K + 3 C_A^2} \frac{l(\gamma)}{2})} , \\
Q_s &= \frac{\sinh(\sqrt{K} \frac{l(\gamma)}{2})}{\sqrt{K} \frac{l(\gamma)}{2}} \cdot \frac{l(\gamma)}{2} \eqqcolon \tilde{Q}_s l(\gamma).
\end{align*}
Note that for any loop $\bar{\alpha}$ in $Y$ of length less or equal than $l(\gamma)$ the corresponding nullhomotopy $\bar{H}$ of $\bar{\alpha}$ satisifies the bounds $\left\vert \frac{\partial}{\partial t} \bar{H} \right\vert \leq Q_t l(\bar{\alpha})$ and $\left\vert \frac{\partial}{\partial s} \bar{H} \right\vert \leq \tilde{Q}_s l(\bar{\alpha})$

Thus, in our case the differential equation \eqref{ODE} reads as
\begin{align*}
(\rho_l)^{\prime}(t) = k C_A \tilde{Q}_s Q_t l^2 (1 + 4^k k!) + k Q_t l (C_T + 4^k k! C_A) \rho_l (t) \ ; \ \rho_l(0) = 0,
\end{align*}
for $0 \leq l \leq l(\gamma)$, compare \cite[Lemma 3.3]{Tapp}. For simplicity, set
\begin{align*}
G_1 &\coloneqq k C_A \tilde{Q}_s Q_t (1 + 4^k k!), \\
G_2 &\coloneqq k Q_t (C_T + 4^k k! C_A).
\end{align*}  
Using the variation of constants, we conclude
\begin{align*}
\rho_l(1) = \frac{G_1}{G_2} l \big( e^{G_2 l} -1 \big).
\end{align*}
Therefore,
\begin{align}\label{BoundtheODE}
\frac{\mathrm{d}}{\mathrm{d} l} \rho_l(1) = \frac{G_1}{G_2}\big( e^{G_2 l} -1 \big) + G_1 l e^{G_2 l} \leq  \frac{G_1}{G_2}\big( e^{G_2 l(\gamma)} -1 \big) + G_1 l(\gamma) e^{G_2 l(\gamma)} \eqqcolon P.
\end{align}
It remains to check the behavior of the appearing quantities as $l(\gamma)$ becomes small. As $Q_t$ and $\tilde{Q}_s$ are the only appearing quantities that depend on $l(\gamma)$ we first note that
\begin{align*}
\lim_{l(\gamma) \rightarrow 0 } Q_t &=  1\\ 
\lim_{l(\gamma) \rightarrow 0} \tilde{Q}_s &= \frac{1}{2}.
\end{align*}
Therefore, we extract the quantities $Q_t$ and $\tilde{Q}_s$ in \eqref{BoundtheODE}. This is done by considering each term separately. We observe that
\begin{align*}
\frac{G_1}{G_2} &= \frac{k C_A \tilde{Q}_s Q_t (1 + 4^k k!)}{k Q_t (C_T + 4^k k! C_A)} = \tilde{Q}_s \frac{C_A(1+4^k k!)}{C_T + 4^k k! C_A} \eqqcolon \tilde{Q}_s \cdot C_1(C_A,C_T, k), \\
G_1 l(\gamma) &= k C_A \tilde{Q}_s Q_t (1 + 4^k k!) l(\gamma) \eqqcolon \tilde{Q}_s Q_t l(\gamma) \cdot C_2 (C_A, k),\\
G_2 l(\gamma) &= k Q_t (C_T + 4^k k! C_A) l(\gamma) \eqqcolon Q_t l(\gamma) \cdot C_3(C_A,C_T,k).
\end{align*}
As $l(\gamma)$ becomes small, we obtain
\begin{align*}
\lim_{l(\gamma) \rightarrow 0} \frac{G_1}{G_2} &= \frac{1}{2} \cdot C_1(C_A, C_T, k),\\
\lim_{l(\gamma) \rightarrow 0} G_1 l(\gamma) &= 0 \cdot C_2(C_A, k) = 0,\\
\lim_{l(\gamma) \rightarrow 0} G_2 l(\gamma) &= 0 \cdot C_3(C_A,C_T,k) = 0.
\end{align*}
Summarizing these observations we conclude
\begin{align*}
\lim_{l(\gamma)\rightarrow 0} P &= \lim_{l(\gamma)\rightarrow 0} \frac{G_1}{G_2}\big( e^{G_2 l} -1 \big) + G_1 l e^{G_2 l} \leq \lim_{l(\gamma)\rightarrow 0} \frac{G_1}{G_2}\big( e^{G_2 l(\gamma)} -1 \big) + G_1 l(\gamma) e^{G_2 l(\gamma)}\\
&= \frac{1}{2}C_1(C_A,C_T,k) \big(e^0 -1 \big) + 0 \cdot e^0 = 0.
\end{align*}
This shows that $P = \tau(l(\gamma) \vert C_A, C_T, k, K)$.

Next, we consider the inequality $l(\beta_2) \leq  L \cdot l(\gamma)$. Here, $L$ is the maximum of the Lipschitz constants of the holonomy diffeomorphism $h^{\alpha}$ associated to paths $\alpha$ in $Y$. Since $h^{\alpha}$ satisfies the Lipschitz constant $e^{C_T \cdot l(\alpha)}$ (c.f.\ \cite[Lemma 4.2]{HolonomyDiffeo}) and $l(\alpha)$ is bounded from above by $l(\gamma)$ we conclude that
\begin{align*}
L = e^{C_T \cdot l(\gamma)} =1 + \tau(l(\gamma) \vert C_T ).
\end{align*}
Together with $P = \tau(l(\gamma) \vert C_A, C_T, k, K)$, this shows the claim.

Thus, we finally obtain
\begin{align*}
2\inj^{F_p} \leq l(\tilde{\gamma}) &\leq C \cdot l(\gamma) \\
&= \Big(1 + \tau ( l(\gamma) \vert C_A, C_T, k, K) \Big) \cdot l(\gamma) \\
&= \Big(1 + \tilde{\tau}( \inj^M(x) \vert C_A, C_T, k, K) \Big) \cdot 2 \inj^M(x).
\end{align*}
\end{proof}

\begin{rem}
If $- K \leq \sec^M \leq - \kappa$ for some $\kappa >0$ such that $(-\kappa + 3 C_A^2) \leq 0$ then the assumpion $\inj^M(x) < \frac{1}{4}\inj^Y(p)$ is already sufficient for Proposition \ref{injFiber} to hold, as $M$, as well as $Y$, do not have any conjugate points. In particular, the injectivity radius at some point at $M$ equals half of the length of the shortest geodesic loop based at that point.
\end{rem}


\section{Characterization of Codimension One Collapse}

In this section, we prove Theorem \ref{MainTheorem} and discuss the properties of the subspace of $\mfdspace$ consisting of manifolds satisfying the condition \eqref{condition} for chosen $r$ and $C$.

We first observe that, in the case of a noncollapsing sequence, the statement of Theorem \ref{MainTheorem} is obviously true, as the limit space is a closed Riemannian manifold of the same dimension. Thus, we only consider the case of collapsing sequences in $\mfdspace$.

The strategy of the proof of Theorem \ref{MainTheorem} is to first reduce the statement to sequences of sufficiently collapsed manifolds with invariant metrics in the sense of \cite{CheegerFukayaGromov}. Then, we prove Theorem \ref{MainTheorem} in that special case. 

Thus, we first show that, for uniformlya collapsing sequence $(M_i, g_i)_{i \in \bbN}$, we can switch to invariant metrics $\tilde{g}_i$ without affecting the statement of Theorem \ref{MainTheorem}.

\begin{lem}\label{SimplifyMain}
Let $(M_i,g_i)_{i \in \bbN}$ be a collapsing sequence in $\mfdspace$ with limit space $Y$. For $\delta >0$ sufficiently small and all $i \in \bbN$ sufficiently large, there is an invariant metric $\tilde{g}_i$ such that 
\begin{align*}
\vert g_i - \tilde{g}_i \vert < (e^{\delta}-1) + C(n,\delta) \dGH(M_i, Y), \\
\vert \nabla_i - \tilde{\nabla}_i \vert \leq \delta + C_1(n, \delta) \dGH(M_i, Y),\\
\vert \tilde{\nabla}^j_i \tilde{R}_i \vert \leq C(j, n, \delta)(1 + \dGH(M_i,Y)).
\end{align*}
In particular, 
\begin{align*}
e^{- \tau(\dGH(M_i, Y) \vert n, \delta)- \tau(\delta \vert n)} \frac{\vol(\tilde{B}^{M_i}_r(x))}{\widetilde{\inj}^{M_i}(x)} &\leq \frac{\vol(B_r^{M_i})(x)}{\inj^{M_i}(x)}\\
& \leq e^{\tau(\dGH(M_i, Y) \vert n, \delta) + \tau(\delta \vert n)} \frac{\vol(\tilde{B}^{M_i}_r(x))}{\widetilde{\inj}^{M_i}(x)},
\end{align*}
where $\tilde{B}^{M_i}_r(x)$ and $\widetilde{\inj}^{M_i}(x)$ are taken with respect to the metric $\tilde{g}_i$.
\end{lem}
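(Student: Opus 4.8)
The plan is to manufacture $\tilde g_i$ by composing Abresch's smoothing with the Cheeger--Fukaya--Gromov invariant metric, and then to read off the volume/injectivity-radius comparison from the resulting $C^1$-closeness of $g_i$ and $\tilde g_i$. First I would set $g_i' \coloneqq S_\delta(g_i)$ as in Theorem \ref{SmoothMetric}: this gives $e^{-\delta}g_i < g_i' < e^{\delta}g_i$, $\vert\nabla_i - \nabla_i'\vert < \delta$, and $A(n,\delta)$-regularity. By Proposition \ref{RongSec} (for $\delta$ below the threshold there) one gets $\vert\sec^{g_i'}\vert \le 1 + C(n)\delta$ and $\diam(M_i,g_i') \le e^{\delta/2}D$, so after rescaling by $1+C(n)\delta$ the pair $(M_i,g_i')$ lies in $\mathcal{M}(n,D')$ for a fixed $D' = D'(n,\delta) \le 2D$ and is still $A(n,\delta)$-regular. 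Since $\mathrm{id}\colon (M_i,g_i)\to(M_i,g_i')$ is $e^{\delta/2}$-bi-Lipschitz we have $\dGH\big((M_i,g_i'),(M_i,g_i)\big) \le \tau(\delta\vert n,D)$, hence for $\delta$ small and $i$ large $\dGH\big((M_i,g_i'),Y\big) < \eps(n,D')$ with $Y$ still lower-dimensional. Applying Theorem \ref{invMetric} to $(M_i,g_i')$ and $Y$ (and undoing the rescaling) then yields an invariant metric $\tilde g_i$ with $\vert\nabla_i'^{\,j}(g_i'-\tilde g_i)\vert \le c(n,\delta,j)\big(\dGH(M_i,Y) + \tau(\delta\vert n,D)\big)$, and with $\tilde\eta_i\colon FM_i\to\tilde Y$ a Riemannian submersion whose fibers have second fundamental form bounded by $C(n,\delta)$.

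The three displayed estimates then follow by bookkeeping (passing between the $g_i$- and $g_i'$-norms costs only factors $e^{\pm\delta}$). For the first, $\vert g_i - \tilde g_i\vert \le \vert g_i - g_i'\vert + \vert g_i' - \tilde g_i\vert$ with the first term $\le e^{\delta}-1$ and the second the $j=0$ bound above. For the second, using $\nabla_i'\tilde g_i = \nabla_i'(\tilde g_i - g_i')$ and the Christoffel difference formula $(\tilde\Gamma - \Gamma')^k_{ij} = \tfrac12\tilde g^{kl}(\nabla_i'\tilde g_{jl} + \nabla_j'\tilde g_{il} - \nabla_l'\tilde g_{ij})$, one gets $\vert\nabla_i - \tilde\nabla_i\vert \le \vert\nabla_i - \nabla_i'\vert + \vert\nabla_i' - \tilde\nabla_i\vert \le \delta + C(n)\vert\tilde g_i^{-1}\vert\,\vert\nabla_i'(\tilde g_i - g_i')\vert$, which is the required bound via the $j=1$ estimate. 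For the third, $\tilde R_i$ and its iterated $\tilde\nabla_i$-derivatives are universal polynomial expressions in $\tilde g_i$, $\tilde g_i^{-1}$ and the $\nabla_i'$-derivatives of $\tilde g_i$ up to a fixed order, so $\vert\nabla_i'^{\,j}R_i'\vert \le A_j(n,\delta)$ together with the closeness of $\tilde g_i$ to $g_i'$ gives $\vert\tilde\nabla_i^{\,j}\tilde R_i\vert \le C(j,n,\delta)(1+\dGH(M_i,Y))$; the $i$-independent term $\tau(\delta\vert n,D)$ gets absorbed into the $\delta$-dependent leading constants.

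For the ratio, the first estimate gives, for $\delta$ small and $i$ large, $e^{-\epsilon_i}g_i \le \tilde g_i \le e^{\epsilon_i}g_i$ with $\epsilon_i = \tau(\dGH(M_i,Y)\vert n,\delta) + \tau(\delta\vert n)$. Hence $B^{g_i}_{e^{-\epsilon_i}r}(x) \subseteq \tilde B^{M_i}_r(x) \subseteq B^{g_i}_{e^{\epsilon_i}r}(x)$ and $e^{-n\epsilon_i/2}\dvol_{g_i} \le \dvol_{\tilde g_i} \le e^{n\epsilon_i/2}\dvol_{g_i}$, and the ensuing change of ball radius is controlled by the Bishop--Gromov inequality (valid since $\mathrm{Ric}^{g_i} \ge -(n-1)g_i$), so $\vol(\tilde B^{M_i}_r(x))$ and $\vol(B^{M_i}_r(x))$ agree up to a factor $e^{\pm\epsilon_i'}$ of the same type (with a harmless additional dependence on $r$, resp.\ $D$). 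For the injectivity radii I would use that, for $i$ large, $\inj^{M_i}(x)\to 0$ while the conjugate radius of $g_i$ and of $\tilde g_i$ stays $> \pi/\sqrt{2}$; hence $\inj^{M_i}(x)$ is half the length of a geodesic loop $\gamma$ at $x$ of length below the conjugate radius, so $\gamma$ has no interior conjugate points, i.e.\ it is a nondegenerate critical point of the energy functional on the space of loops based at $x$. By the implicit function theorem, using the $C^1$-closeness $\vert\nabla_i - \tilde\nabla_i\vert \le \epsilon_i$, $\gamma$ persists as a $\tilde g_i$-geodesic loop at $x$ of length $e^{\pm\tau(\epsilon_i\vert n)}\cdot 2\inj^{M_i}(x)$; running the same argument with $g_i$ and $\tilde g_i$ exchanged, $\widetilde{\inj}^{M_i}(x)$ and $\inj^{M_i}(x)$ agree up to a factor $e^{\pm\tau(\epsilon_i\vert n)}$. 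Combining the two comparisons gives the displayed two-sided bound.

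The main obstacle is precisely this last comparison of injectivity radii: $\inj$ is not continuous for the $C^0$-topology on metrics, so the bi-Lipschitz bound alone does not suffice and one must genuinely exploit the $C^1$-control together with the curvature bound, via the nondegeneracy of short geodesic loops. Everything else is organised constant-chasing through the two regularisation steps.
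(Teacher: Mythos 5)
Your proposal is correct and follows essentially the same route as the paper: smooth with Abresch's operator $S_\delta$, control the curvature via Rong's estimate, apply the Cheeger--Fukaya--Gromov construction to the $A$-regular metrics to obtain $\tilde g_i$, and then track the two changes of metric through the volume and injectivity radius. The only difference is one of detail: where the paper justifies the injectivity-radius comparison with a one-line remark that $\inj^{M_i}(x)$ lies below the (uniformly bounded) conjugate radius, you spell out the underlying mechanism --- persistence of the nondegenerate short geodesic loop under $C^1$-perturbation of the connection --- which is a legitimate and in fact more complete account of that step.
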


\begin{proof}
First, we apply Theorem \ref{SmoothMetric} with $\delta$ to the sequence and obtain the sequence $(M_i, \hat{g}_i)_{i \in \bbN}$ which consists only of $A$-regular manifolds, with $(A_j(n, \delta))_{j \in \bbN}$. Furthermore, by choosing $\delta$ sufficiently small, Proposition \ref{RongSec} implies that $\vert \widehat{\sec}^{M_i} \vert \leq (1 + c(n) \delta)$. 

It follows, by the estimates for the metrics $g_i$ and $\hat{g}_i$ in Theorem \ref{SmoothMetric} that
\begin{align}\label{change1}
e^{- \tau(\delta \vert n)} \frac{\vol(\hat{B}^{M_i}_r(x))}{\widehat{\inj}^{M_i}(x)} \leq \frac{\vol(B_r^{M_i})(x)}{\inj^{M_i}(x)} \leq e^{\tau(\delta \vert n)} \frac{\vol(\hat{B}^{M_i}_r(x))}{\widehat{\inj}^{M_i}(x)}
\end{align} 
holds for sufficiently large $i \in \bbN$. Here, sufficiently large means that $\inj^{M_i}(x)$, resp.\ $\widehat{\inj}^{M_i}(x)$ is smaller than the conjugate radius of $(M_i, g_i)$, resp.\ $(M_i, \hat{g}_i)$ which is uniformly bounded in terms of the upper sectional curvature bound. The bound on the conjugate radius for $(M_i, \hat{g}_i)$ only changes slightly by choosing  $\delta >0 $ sufficiently small.

Next, we apply Theorem \ref{invMetric} to each element of $(M_i, \hat{g}_i)_{i \in \bbN}$ which satisfies $\dGH(M_i, Y) \leq \eps(n,D)$ to obtain an invariant metric $\tilde{g}_i$. Recall, that $\eps(n,D)$ is the constant from Theorem \ref{FukayaFiber}. This leads to a new sequence $(M_i, \tilde{g}_i)_{i \in \bbN}$. The claimed bounds on $\tilde{g_i}$ follows by combining the inequalities given in Theorem \ref{SmoothMetric} and Theorem \ref{invMetric}. In particular, after a small rescaling, $(M_i, \tilde{g}_i)_{i \in \bbN}$ lies again in $\mfdspace$. 

Furthermore, as $\vert \hat{g}_i - \tilde{g}_i \vert_{C^{\infty}} \leq \tau(\dGH(M_i,Y) \vert n, \delta)$ it follows that
\begin{align}\label{change2}
e^{- \tau(\dGH(M_i, Y) \vert n, \delta)} \frac{\vol(\tilde{B}^{M_i}_r(x))}{\widetilde{\inj}^{M_i}(x)} \leq \frac{\vol(\hat{B}_r^{M_i})(x)}{\widehat{\inj}^{M_i}(x)} \leq e^{\tau(\dGH(M_i, Y) \vert n, \delta)} \frac{\vol(\tilde{B}^{M_i}_r(x))}{\widetilde{\inj}^{M_i}(x)},
\end{align}uniformly
for $i$ sufficiently large, as before.

Observe, that the sequences $(M_i, \hat{g}_i)_{i \in \bbN}$ and $(M_i, \tilde{g}_i)_{i \in \bbN}$ converge to the same limit space $\hat{Y}$. Furthermore, as $\dGH((M_i, g_i), (M_i, \tilde{g}_i) )$ is small, it follows by \cite[Lemma 2.3]{FukayaBoundary} that the Lipschitz-distance between $Y$ and $\hat{Y}$ is also small. In particular, $Y$ and $\hat{Y}$ are homeomorphic and thus have the same Hausdorff dimension. Together with \eqref{change1} and \eqref{change2} the claim follows.
\end{proof}

In order to prove Theorem \ref{MainTheorem} we consider the following \textit{simplified setting}:

Let $(M_i, g_i)_{i \in \bbN}$ be a sequence in $\mfdspace$ converging to a compact metric space $Y$ of lower dimension. There is a large index $I$ such that $\dGH(M_i, g_i) \leq \eps(n,D)$, where $\eps(n,D)$ is the constant from Theorem \ref{FukayaFiber}, and $\inj^{M_i}(x) < \pi$ for all $x \in M_i$, $i \geq I$ . In order to prove Theorem \ref{MainTheorem}, it is  sufficient to consider such sequences $(M_i, g_i)_{i \geq I}$, where we can assume without loss of generality that $g_i$ is an invariant metric such that $(M_i, g_i)$ is $A(n,D,\delta)$-regular for all $i$, by Lemma \ref{SimplifyMain}. Here, we used that $\dGH(M_i,Y)$ is bounded from above by $\eps(n,D)$, for all $i$.
\newline

The next proposition together with Lemma \ref{SimplifyMain} proves the implication i) to ii) in Theorem \ref{MainTheorem}.

\begin{prop}\label{OneTwo}
Let $(M_i, g_i)_{i \in \bbN}$ be a collapsing sequence of $A$-regular manifolds in $\mfdspace$ converging to a compact metric space $Y$ in the Gromov-Hausdorff topology. Suppose that for each $i$, $\dGH(M_i,Y) \leq \eps(n,D)$, and $\inj^{M_i}(x) < \pi$ for all $x \in M_i$, and that the metric $g_i$ is invariant for the corresponding $N$-structure on $M_i .$ 

If $ \dim_{\mathrm{Haus}}(Y) = (n-1)$, then for each $r>0$ there is a positive constant $C \coloneqq C(n,r,Y)$ such that
\begin{align}\label{conditionAgain}
C \leq \frac{\vol(B_r^{M_i}(x))}{\inj^{M_i}(x)}
\end{align}
for all $x \in M_i$ and $i \in \bbN.$
\end{prop}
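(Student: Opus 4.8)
The plan is to use the invariant metric to write $M_i$ as the total space of a singular fibration over $Y$ with infranil fibers $F_{p}$, and to track the volume of a ball $B_r^{M_i}(x)$ by "integrating out" the fiber directions. Fix $r>0$ and a point $x\in M_i$ lying over $p=\eta_i(x)\in Y$. Since $\dim_{\mathrm{Haus}}(Y)=n-1$, the generic fiber $F_p$ has dimension $1$, hence is a circle (or a segment in the presence of nontrivial isotropy, but the invariant-metric picture on the frame bundle makes the generic fiber a flat circle). Because $\tilde\eta_i$ is an almost Riemannian submersion with second fundamental form of the fibers bounded by $C(n,A)$ (Theorem \ref{invMetric}), the fiber $F_p$ through $x$ is an almost-geodesic circle whose length is comparable, up to a factor $1+\tau$, to $2\inj^{M_i}(x)$: indeed the shortest geodesic loop at $x$ realizing $\inj^{M_i}(x)$ is, after projecting and applying Proposition \ref{HomotopCurve}/\ref{injFiber} in reverse spirit, essentially the fiber circle, so $\mathrm{length}(F_p)\geq c(n)\inj^{M_i}(x)$ for a dimensional constant. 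The key consequence is a lower bound $\mathrm{length}(F_q)\geq c(n)\,\inj^{M_i}(x)$ for \emph{all} $q$ in a fixed $Y$-ball around $p$, using that $\inj^{M_i}$ varies in a controlled (Lipschitz, with constant depending on the bounded $A,T$) way along the base and along nearby fibers — this is where the invariance and the curvature-derivative bounds are used.

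Next I would slice the ball: by the coarea/Fubini structure of the almost Riemannian submersion,
\begin{align*}
\vol\big(B_r^{M_i}(x)\big)\ \geq\ e^{-\tau}\int_{B^{Y}_{r/2}(p)}\mathrm{length}\big(F_q\cap B_r^{M_i}(x)\big)\,d\vol^{Y}(q).
\end{align*}
For $q$ with $d_Y(p,q)<r/2$, a substantial sub-arc of the fiber $F_q$ lies inside $B_r^{M_i}(x)$: since the fiber has diameter $\leq \tau(\dGH(M_i,Y))\to 0$ (fibers collapse) it is entirely contained in $B_{r/2+\tau}^{M_i}(x)\subset B_r^{M_i}(x)$ once $i$ is large, so $\mathrm{length}(F_q\cap B_r^{M_i}(x))=\mathrm{length}(F_q)\geq c(n)\inj^{M_i}(x)$. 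Therefore
\begin{align*}
\vol\big(B_r^{M_i}(x)\big)\ \geq\ e^{-\tau}\,c(n)\,\inj^{M_i}(x)\cdot\vol^{Y}\big(B^{Y}_{r/2}(p)\big),
\end{align*}
and dividing by $\inj^{M_i}(x)$ gives \eqref{conditionAgain} with $C=\tfrac12 c(n)\inf_{p\in Y}\vol^Y(B^Y_{r/2}(p))$. The infimum is positive and depends only on $n$, $r$ and $Y$ because $Y$ is a compact $C^{1,\alpha}$ Riemannian orbifold of dimension $n-1$ (Fukaya), so it has a uniform positive lower volume bound on balls of a fixed radius; one must be mildly careful at orbifold points, where a ball's volume is reduced by at most the order of the (finite) isotropy group, which is itself bounded in terms of $n,D$.

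The main obstacle I expect is making rigorous the claim that the fiber through $x$ has length bounded \emph{below} by $c(n)\,\inj^{M_i}(x)$, and that this persists (up to a uniform multiplicative constant) over a whole $r/2$-ball in $Y$. Proposition \ref{injFiber} gives the reverse inequality $\inj^{F_p}\leq (1+\tau)\inj^{M_i}(x)$, which bounds fiber length from above; for the lower bound I need that the shortest geodesic loop at $x$ cannot be much shorter than the fiber, i.e.\ that there is no "short horizontal loop". This is exactly where codimension one enters: the only collapsing directions are the $1$-dimensional fibers, so any noncontractible loop of length $O(\inj^{M_i}(x))$ must be homotopic into the fiber and, by Proposition \ref{HomotopCurve} applied with the explicit constant $1+\tau$, cannot be shorter than $(1-\tau)$ times the fiber systole. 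Propagating the bound across nearby fibers then uses the bounded holonomy (Lipschitz constant $e^{C_T l}$) together with the fact that $\inj^{M_i}$ restricted to a fiber is constant up to $\tau$, and that $\inj^{M_i}$ changes by at most a bounded factor between fibers over points at $Y$-distance $\leq r/2$, which follows from the two-sided almost-submersion estimate in Theorem \ref{FukayaFiber}(iii) and standard comparison. Once these uniform fiber-length bounds are in hand, the volume slicing and the orbifold volume lower bound are routine.
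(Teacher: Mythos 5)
Your overall strategy is the same as the paper's: identify $Y$ as a compact $(n-1)$-dimensional Riemannian orbifold so that $\eta_i$ is a circle (orbifold) bundle, bound $\vol(B_r^{M_i}(x))$ from below by a constant times $\vol\big(B^Y_{r/2}(p)\big)\cdot \mathrm{length}(F^i_p)$ using the uniform bound on the $T$-tensor, reduce everything to the inequality $\mathrm{length}(F^i_p)\geq c\,\inj^{M_i}(x)$, and finish with a uniform positive lower bound for $\vol\big(B^Y_{r/2}(p)\big)$ on the compact orbifold. Your slicing inequality, the propagation of fiber lengths over the base ball via the holonomy Lipschitz bound, and the remark about finite isotropy at orbifold points are all fine and match what the paper does (more tersely).

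The gap is in your justification of the step you yourself flag as the crux, namely $\mathrm{length}(F_p)\geq c(n)\inj^{M_i}(x)$. The mechanism you invoke --- the shortest geodesic loop $\gamma$ at $x$ is homotopic, via Proposition \ref{HomotopCurve}, to a noncontractible loop $\tilde{\gamma}$ in $F_p$ with $l(\tilde{\gamma})\leq(1+\tau)l(\gamma)$, hence $l(\gamma)\geq(1-\tau)\,\mathrm{length}(F_p)$ --- proves exactly the \emph{reverse} inequality $\mathrm{length}(F_p)\leq(1+\tau)\,2\inj^{M_i}(x)$; this is Proposition \ref{injFiber}, which the paper needs for the implication iii)$\Rightarrow$i), not here. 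Likewise, excluding ``short horizontal loops'' only rules out $\inj^{M_i}(x)\ll\mathrm{length}(F_p)$, which is the harmless direction: the scenario that must be excluded is a fiber much \emph{shorter} than $\inj^{M_i}(x)$, and Tapp's homotopy says nothing about that. (The paper itself is silent on this point when it divides $\vol(F^i_p)=2\inj^{F_p}$ by $\inj^{M_i}(x)$ and drops the ratio.) Two correct repairs: (a) the fiber is an orbit of the local nilpotent structure, so in the local cover the generating deck transformation displaces a lift $\tilde{x}$ of $x$ by at most $\mathrm{length}(F_p)$; projecting a minimizing segment gives a geodesic loop at $x$ of length at most $\mathrm{length}(F_p)$, whence $2\inj^{M_i}(x)\leq\mathrm{length}(F_p)$; or (b) if $\mathrm{length}(F_p)<\inj^{M_i}(x)$, lift $F_p$ by $\exp_x^{-1}$ to a closed curve in $T_xM$ whose geodesic curvature is bounded by $C_T$ plus a comparison error; Fenchel's theorem forces $\mathrm{length}(F_p)\geq c(C_T)>0$, contradicting collapse for large $i$. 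A third option is the volume comparison $c_n\inj^{M_i}(x)^n\leq\vol\big(B^{M_i}_{\inj^{M_i}(x)}(x)\big)\leq C\,\inj^{M_i}(x)^{n-1}\,\mathrm{length}(F_p)$. With any of these in place of your homotopy argument, the rest of your proof goes through.
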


\begin{proof}
As $\dim_{\mathrm{Haus}}(Y)= (n-1)$ it follows by \cite[Proposition 11.5]{FukayaOrbifold} that $Y$ is a compact Riemannian orbifold. As the fibration, $\tilde{\eta}_i: FM_i \rightarrow \tilde{Y}$ is an $\bbS^1$-bundle, it descends to an $\bbS^1$-orbifold bundle $\eta_i : M_i \rightarrow Y$. 

Fix some $r > 0$. As $i \rightarrow \infty$, the ball $B^{M_i}_r(x)$ more and more resembles $\eta_i^{-1}(B^Y_r(\eta_i(x)))$. Therefore, it follows that there is an index $I$ such that for any $i > I$
\begin{align*}
\eta_i^{-1}(B^Y_{\frac{r}{2}}(p) ) \subset B^{M_i}_r(x)
\end{align*}
holds for all $p \in Y$ and $x \in F^i_p \coloneqq \eta^{-1}_i (p)$ ( e.g.\ one can use Toponogov's theorem as the sequence lies in $\mfdspace$).

Since the $T$-tensor of the Riemannian submersions $\tilde{\eta}_i$ is uniformly bounded by a constant $C_T(n,A)$, see Theorem \ref{invMetric}, it follows, by considering the commuting diagramm \eqref{diagramm}, that for any $r > 0$ there is a positive constant $C_1 \coloneqq C_1(r, n , C_T )$ such that, for all $i > I$,
\begin{align*}
\vol(B^{M_i}_r(x)) \geq  C_1 \vol(B^Y_{\frac{r}{2}}(p)) \vol(F^i_p) =  C_1 \vol(B^Y_{\frac{r}{2}}(p)) \, 2 \inj(F_p).
\end{align*}
The last equality holds as $F^i_p \cong \bbS^1$. Therefore,
\begin{align*}
\frac{\vol(B^{M_i}_{r}(x))}{\inj^{M_i}(x)} & \geq C_1 \vol(B^Y_{\frac{r}{2}}(p))\\
& \geq 2 C_1 \inf_{i \in \bbN} \min_{p \in Y} \vol(B^Y_{\frac{r}{2}}(p)).
\end{align*}

As $Y$ is a smooth compact Riemannian orbifold, the above constant $C$ is strictly positive.
\end{proof}

To finish the proof of Theorem \ref{MainTheorem} it remains to show that iii) implies i). We again assume, without loss of generality, the same simplified setting as explained above. The main idea of the proof is to derive a contradiction by constructing an upper bound converging to $0$. This is done in the next proposition which, toghether  with Lemma \ref{SimplifyMain}, finishes the proof of Theorem \ref{MainTheorem}.

\begin{prop}\label{ThreeOne}
Let $(M_i, g_i)_{i \in \bbN}$ be a collapsing sequence of $A$-regular manifolds in $\mfdspace$ converging to a compact metric space $Y$ in the Gromov-Hausdorff topology. Suppose that for each $i$, $\dGH(M_i,Y) \leq \eps(n,D)$, $\inj^{M_i}(x) < \pi$ for all $x \in M_i$ and that the metric $g_i$ is invariant for the corresponding $N$-structure on $M_i .$ 

Then $\dim_{\mathrm{Haus}}(Y) = (n-1)$ if there is some $r >0$ and $C>0$ such that
\begin{align*}
C \leq \frac{\vol(B_r^{M_i}(x))}{\inj^{M_i}(x)}
\end{align*}
for all $x \in M$ and all $i \in \bbN$.
\end{prop}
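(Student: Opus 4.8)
The plan is to argue by contradiction, exhibiting an upper bound for the quotient $\vol(B^{M_i}_r(x))/\inj^{M_i}(x)$ that tends to $0$. Suppose $m\coloneqq\dim_{\mathrm{Haus}}(Y)\neq n-1$; since the sequence collapses and $\dim_{\mathrm{Haus}}(Y)$ is an integer (\cite{FukayaBoundary}), we have $m\leq n-2$, so the infranil fibers $F^i_p\coloneqq\eta_i^{-1}(p)$ of the fibration in Theorem \ref{FukayaFiber} have dimension $k\coloneqq n-m\geq 2$. In the simplified setting the lift $\tilde\eta_i\colon FM_i\to\tilde Y$ is a genuine Riemannian submersion with fundamental tensors bounded by constants $C_A,C_T$ depending only on $(n,A)$ (Theorem \ref{invMetric}), $\tilde Y$ is a $C^{1,\alpha}$ Riemannian manifold, and the fibers shrink uniformly, $\sup_p\diam(F^i_p)\leq\tau(\dGH(M_i,Y)\,\vert\,n,D)\to 0$, while $\sup_{x\in M_i}\inj^{M_i}(x)\to 0$ (otherwise a subsequence of suitable balls would $C^{1,\alpha}$-converge to a piece of an $n$-manifold, forcing $\dim_{\mathrm{Haus}}(Y)\geq n$). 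I will bound the quotient above by $C\,\diam(F^i_p)^{\,k-1}$, up to a $\tau$-factor.

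\emph{Step 1 (ball volume versus fiber volume).} For $i$ large the almost-Riemannian-submersion property and the fact that $\eta_i$ is a $\tau(\dGH(M_i,Y)\,\vert\,n,D)$-Gromov--Hausdorff approximation give $B^{M_i}_r(x)\subseteq\eta_i^{-1}\!\big(B^Y_{2r}(p)\big)$ with $p=\eta_i(x)$. Applying the coarea formula to $\eta_i$, using $\vert T\vert\leq C_T$ to control the vertical Jacobian, and comparing the volumes of different fibers via the $e^{C_T l}$-Lipschitz holonomy diffeomorphisms $h^\alpha$ along paths of length $\leq\diam(Y)\leq D$ (\cite[Lemma 4.2]{HolonomyDiffeo}), we obtain $\sup_q\vol(F^i_q)\leq e^{kC_TD}\vol(F^i_p)$, and hence
\[
\vol\big(B^{M_i}_r(x)\big)\;\leq\;C_1(n,A,r)\,\mathcal H^m\!\big(B^Y_{2r}(p)\big)\,\sup_q\vol(F^i_q)\;\leq\;C_2(n,A,D,r,Y)\,\vol(F^i_p),
\]
where $\mathcal H^m\big(B^Y_{2r}(p)\big)\leq\mathcal H^m(Y)<\infty$ because $Y$, being a Gromov--Hausdorff limit of manifolds with $\sec\geq-1$, is a compact Alexandrov space of finite $m$-dimensional Hausdorff measure.

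\emph{Step 2 (fiber volume versus $\inj$ and $\diam$; fiber injectivity radius versus manifold injectivity radius).} Each $F^i_p$ is a $k$-manifold with $\vert\sec^{F^i_p}\vert\leq K(n,A)$ (Gauss equation, from $\vert\sec^{M_i}\vert\leq 1+\tau$ and $\vert T\vert\leq C_T$) and $\diam(F^i_p)=:d_i\to 0$, hence an almost flat infranilmanifold. Passing to its nilmanifold finite cover $\Gamma\backslash N$ of bounded degree $w(n)$ and using a lattice basis adapted to the lower central series, one gets the Minkowski-type estimate $\vol(F^i_p)\leq C_3(n)\,\inj^{F^i_p}\,\diam(F^i_p)^{\,k-1}$ (for a flat $k$-torus this is $\prod_j\ell_j\leq\ell_{\min}\,\ell_{\max}^{\,k-1}$). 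Next, Proposition \ref{injFiber} applied to $\tilde\eta_i\colon FM_i\to\tilde Y$ — whose hypotheses are genuinely met, since $\tilde Y$ is a $C^{1,\alpha}$ manifold so that Lemma \ref{NullHomotopConst} is available and $\inj^{FM_i}\to 0$ — together with a routine comparison of $\inj^{M_i}(x)$, $\inj^{FM_i}(\tilde x)$ and of the fibers $F^i_p$, $\tilde F^i_p$ through the totally geodesic $O(n)$-bundle $\pi_i$, yields $\inj^{F^i_p}\leq\big(1+\tau(\inj^{M_i}(x)\,\vert\,C_A,C_T,k,K)\big)\,\inj^{M_i}(x)$.

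\emph{Conclusion.} Combining the three estimates,
\[
\frac{\vol\big(B^{M_i}_r(x)\big)}{\inj^{M_i}(x)}\;\leq\;C_2\,\frac{\vol(F^i_p)}{\inj^{M_i}(x)}\;\leq\;C_2C_3\big(1+\tau\big)\,\diam(F^i_p)^{\,k-1}\;\leq\;C_4\,\tau\big(\dGH(M_i,Y)\,\vert\,n,D\big)^{\,k-1},
\]
which tends to $0$ as $i\to\infty$ since $k-1\geq 1$. This contradicts the assumed bound $C\leq\vol(B^{M_i}_r(x))/\inj^{M_i}(x)$ with $C>0$, so $m=n-1$. The main obstacle is Step 2: making the volume-versus-$(\inj,\diam)$ inequality for the collapsed infranil fibers rigorous beyond the abelian case, and the bookkeeping needed to invoke Proposition \ref{injFiber} when $Y$ itself may be singular, which forces one to pass to the frame bundle $FM_i\to\tilde Y$ and descend again.
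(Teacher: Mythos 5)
Your overall architecture coincides with the paper's: argue by contradiction, factor $\vol(B^{M_i}_r(x))$ through the fiber volume, control $\inj^{F^i_p}$ by $\inj^{M_i}(x)$ via Proposition \ref{injFiber}, and let the remaining factor $\diam(F^i_p)^{k-1}$ with $k-1\geq 1$ drive the quotient to $0$. However, the step you yourself flag as ``the main obstacle'' is a genuine gap, and it sits at the center of the argument: your inequality $\vol(F^i_p)\leq C_3(n)\,\inj^{F^i_p}\diam(F^i_p)^{k-1}$ is justified only for flat tori, and the proposed extension via a nilmanifold cover and a lattice basis adapted to the lower central series is not carried out. Without that inequality there is no proof. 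The paper obtains it with no reference to the infranil structure at all: the fiber lies in the tube of radius $\diam(F^i_p)$ around a shortest geodesic loop of length $2\inj^{F^i_p}$, and the Heintze--Karcher tube volume comparison (\cite[Corollary 2.3.2]{HeintzeKarcher}), which requires only the lower sectional curvature bound $\sec^{F^i_p}\geq -K^2$ coming from Gray--O'Neill and the uniform $T$-tensor bound, gives
\begin{align*}
\vol(F^i_p)\;\leq\; C_3(k)\,\inj^{F^i_p}\left(\frac{\sinh(\diam(F^i_p)\,K)}{K}\right)^{k-1}.
\end{align*}
That is the ingredient you are missing, and it closes the gap cleanly.

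A second divergence also hides unfinished work. You apply Proposition \ref{injFiber} upstairs to $\tilde{\eta}_i\colon FM_i\to\tilde{Y}$ and then ``descend,'' but the comparison of $\inj^{FM_i}(\tilde x)$ with $\inj^{M_i}(x)$ and of the fibers $\tilde F^i_{\tilde p}$ with $F^i_p$ is not routine bookkeeping: horizontal lifts of geodesic loops in $M_i$ need not close up in $FM_i$, so the injectivity radii are not simply comparable. The paper avoids this entirely by invoking Naber--Tian to pick a regular point $p\in Y$ and a compact manifold neighborhood $\overline{B^Y_{\kappa}(p)}$ over which $\eta_i$ itself is a bounded Riemannian submersion between manifolds; the $C^{\infty}$ convergence of the pushed-forward metrics there supplies the uniform $A$-tensor bound that Proposition \ref{injFiber} needs, and since the hypothesis \eqref{conditionAgain} holds at every point, deriving the contradiction at this single regular point suffices. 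This localization also makes your Step 1 unnecessary in its uniform form: one only needs $\vol(B^{M_i}_r(x))\leq C_1\vol(B^Y_r(p))\vol(F^i_p)$ at the chosen $p$, with $\vol(B^Y_r(p))$ a harmless bounded factor, rather than a holonomy comparison of fiber volumes over all of $Y$ including its singular part.
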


\begin{proof}
Let $(M_i, g_i)_{i \in \bbN}$ be a collapsing sequence in $\mfdspace$ such that the Gromov-Hausdorff limit $Y$ has codimension $k \geq 2$. Furthermore, assume that there are positive numbers $r$ and $C$ such that \eqref{conditionAgain} is satisfied for all points $x \in M_i$ and all $i$.

Naber and Tian proved in \cite[Theorem 1.1]{NaberTian} that the limit space $Y$ is a Riemannian orbifold outside a set $S$ of Hausdorff dimension $\dim_{\mathrm{Haus}}(S) \leq \min \lbrace n-5, \dim Y - 3 \rbrace$. Hence, $\hat{Y} \coloneqq Y \smallsetminus S$ is a Riemannian orbifold. 

By Theorem \ref{invMetric} it follows that the fundamental form of $\tilde{\eta}_i: FM_i \rightarrow \tilde{Y}$ is uniformly bounded by a constant $\tilde{C}_T(n,A)$. Therefore, considering the commutive diagramm \eqref{diagramm}, it follows that for any $r > 0$ there is a constant $C_1(r,n,\tilde{C}_T)$ such that
\begin{align*}
\vol(B^{M_i}_r(x)) \leq C_1 \vol(B^Y_{r}(\eta_i(x))) \vol(F^i_{\eta_i(x)})
\end{align*}
for any $x \in M_i$, $i \in \bbN$.

Let $p \in \hat{Y}$ be a regular point and $x \in F^i_p $. Thus, there is some $\kappa >0$ such that $\overline{B^Y_{\kappa}(p)}$ is a compact Riemannian manifold with boundary. 

Now, we consider the maps $\eta_i$ restricted to the preimage  $\eta_i^{-1}\big( \overline{B^Y_{\kappa}(p)} \big)$ for all $i$. These are Riemannian submersions between manifolds. Recall that the $T$-tensor of the Riemannian submersions $\tilde{\eta}_i$ is uniformly bounded in $i$, by Theorem \ref{invMetric}. Hence, also the $T$-tensor of $\eta_i$ restricted to the preimage of $\overline{B^Y_{\kappa}(p)}$ is uniformly bounded by a constant $C_T$.

As the sequence $(M_i,g_i)_{i \in \bbN}$ only consists of $A$-regular manifolds, we can extract a subsequence, denoted by $(M_i, g_i)_{i \in \bbN}$ such that the Riemannian metrics $(\tilde{\eta}_i)_{\ast}(g_i^F)$ on $\tilde{Y}$ converges in $C^{\infty}$. Here $g_i^F$ denotes the metric on the frame bundle $FM_i$ induced by $g_i$. Therefore, it follows that the metrics $(\eta_i)_{\ast}(g_i)$ converges in $C^{\infty}$ on $\overline{B^Y_{\kappa}(p)}$. In particular, the sectional curvature on $B^Y_{\kappa}(p)$ can be uniformly bounded in $i$. Hence, by Gray-O'Neill's formula, the $A$-tensor is also, on $B^Y_{\kappa}(p)$, uniformly bounded in norm by a constant $C_A$. 

Now, let $\gamma$ be the noncontractible geodesic loop based at $x \in F_p$ such that $l(\gamma) = 2 \inj^{M_i}(x)$. By taking $i$ sufficiently large, the assumptions of Proposition \ref{injFiber} are fulfilled and therefore
\begin{align}\label{injEstimate}
\inj^{F^i_p} \leq (1 + \tau(\inj^{M_i}(x) \vert k, C_T, C_A) \, ) \inj_{M_i}(x) \eqqcolon C_2 \inj^{M_i}(x).
\end{align}

It follows easily from Gray-O'Neill's formula that the sectional curvature of $F_p^i$ is bounded from below by $-K^2$ for some positive constant $K \coloneqq K(C_T)$. Hence, we apply \cite[Corollary 2.3.2]{HeintzeKarcher} and \eqref{injEstimate} to conclude
\begin{align*}
C & \leq \frac{\vol(B^{M_i}_r(x))}{\inj^{M_i}(x)} \\
& \leq \frac{C_1 \vol(B^Y_r(p)) \vol(F^i_p)}{\inj^{M_i}(x)} \\
& \leq \frac{C_1 \vol(B^Y_r(p))\bigg( C_3(k) \inj^{F^i_p} \left( \frac{\sinh(\diam(F^i_p) K )}{K} \right)^{k-1} \bigg) }{\inj^{M_i}(x)} \\
& \leq C_1 C_2 C_3  \vol(B^Y_r(p)) \left( \frac{\sinh(\diam(F^i_p) K )}{K} \right)^{k-1} 
\end{align*}

Since $(M_i, g_i)_{i \in \bbN}$ is a collapsing sequence  $\lim_{i \rightarrow \infty} \diam(F^i_p) =0$. As $k \geq 2$  by assumption, it follows that
\begin{align*}
\lim_{i \rightarrow \infty} \left( \frac{\sinh(\diam(F^i_p) K )}{K} \right)^{k-1}  = 0.
\end{align*}
This implies in the limit $ C \leq 0$ which is a contradiction.
\end{proof}

As a further explicit example we consider the Hopf fibration $\bbS^1 \rightarrow \bbS^3 \rightarrow \bbS^2(\frac{1}{2})$. We compute the constant from Theorem \ref{MainTheorem} for this collapsing sequence explicitely.

\begin{ex}[Hopf fibration]
Consider the sequence $(\bbS^3_i, g_i)_{i \in \bbN} \in \mathcal{M}(4, \pi)$. Here $\bbS^3_i$ denotes the total space of the Hopf fibrations where the fibers are scaled such that they have diameter $\frac{\pi}{i}$. This is a collapsing sequence converging to $\bbS^2(\frac{1}{2}) $. The Riemannian submersions $\eta_i: \bbS^3 \rightarrow \bbS^2(\frac{1}{2})$ are all totally geodesic and the integrability tensors are uniformly bounded by $2$. Let $r = \pi$, then 
\begin{align*}
\vol(\bbS^3_i) = \vol(B_{\pi}^{\bbS^3_i}(x)) = \vol(F_p^i) \vol(B_{\frac{\pi}{2}}(p)) = \frac{2 \pi}{i} \vol\big( \bbS^2 \left( 1/2\right) \big) = \frac{2 \pi^2}{i}.
\end{align*}
Thus, we have
\begin{align*}
\frac{\vol(B_{\pi}^{\bbS^3_i}(x))}{\inj^{\bbS^3_i}(x)} = \frac{\frac{2 \pi^2}{i}}{\frac{\pi}{i}} = 2 \pi = 2 \vol(\bbS^2 \left( 1/2 \right) ),
\end{align*}
which bounds this quotient uniformly in $i$.
\end{ex}

We conclude this paper, by examining the subspace of $\mfdspace$ consisting of manifolds satisfying \eqref{condition} for fixed $r$ and $C$.

\begin{defi}
For given positive numbers $n$, $D$, and $C$, we define $\mfdspaceCon$ as the space consisting of all isometry classes of closed Riemannian manifold $(M,g)$ in $\mfdspace$ satisfying
\begin{align*}
C \leq \frac{\vol(M)}{\inj(M)}.
\end{align*}
\end{defi}

By Theorem \ref{MainTheorem} the closure $\mathcal{C}\mfdspaceCon$ of $\mfdspaceCon$ with respect to the Gromov-Hausdorff distance only consists of Riemannian manifolds or Riemannian orbifolds. For simplicity we consider each limit of a sequence in $\mfdspaceCon$ as an orbifold and understand a manifold as a special case. Recall that a manifold is an orbifold where each point is regular.

At this point we want to recall the following proposition due to Fukaya, (c.f.\ \cite[Proposition 11.5]{FukayaOrbifold}:
\begin{prop}[Fukaya]\label{OrbiProp}
Let $(M_i, g_i)_{i \in \bbN}$ be a sequence in $\mfdspace$ converging to a compact metric space $Y$ of codimension $1$, then the groups $G_p$ (defined in Theorem \ref{FukayaFiber}) are all finite. In other words, $Y$ is a Riemannian orbifold.
\end{prop}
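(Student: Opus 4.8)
The plan is to follow Fukaya and read off the orbifold structure of $Y$ from the isometric $O(n)$-action on the $C^{1,\alpha}$-manifold $\tilde Y$ of Theorem~\ref{FukayaFiber} (passing, if one prefers $\tilde Y$ smooth, to the invariant metrics of Lemma~\ref{SimplifyMain}). Since $G_p=\{g\in O(n):g\tilde p=\tilde p\}$ is a closed, hence compact, subgroup of $O(n)$ and $Y=\tilde Y/O(n)$, the differentiable slice theorem provides an $O(n)$-equivariant tubular neighbourhood $O(n)\times_{G_p}W$ of the orbit $O(n)\cdot\tilde p$, with $W\cong B^m\subset\bbR^m$ a linear slice on which $G_p$ acts orthogonally. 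Consequently $Y$ is near $p$ isometric to $W/G_p$, which is an orbifold chart as soon as $G_p$ is finite. So the whole problem reduces to showing $\dim G_p=0$.

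Next I would assemble the dimensional bookkeeping. As $O(n)$ acts freely on $FM_i$ we have $\dim FM_i=n+\binom n2$, and since $\tilde\eta_i$ is a fibre bundle (Theorem~\ref{FukayaFiber}\,ii)) its fibre has a constant dimension $d$ with $\dim\tilde Y=n+\binom n2-d$. Using the $O(n)$-equivariance of $\tilde\eta_i$ together with the commuting diagram~\eqref{diagramm} one checks $\pi_i^{-1}(F_p^i)=\tilde\eta_i^{-1}(\pi^{-1}(p))=O(n)\times_{G_p}\tilde\eta_i^{-1}(\tilde p)$ and that the restriction $\tilde\eta_i^{-1}(\tilde p)\to F_p^i:=\eta_i^{-1}(p)$ is exactly the quotient by the free $G_p$-action, i.e.\ a principal $G_p$-bundle over $F_p^i$; hence $d=\dim F_p^i+\dim G_p$. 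One then argues $\dim F_p^i=1$ for every $p$: over the regular part of $Y$ the map $\eta_i$ is an honest fibre bundle, so $\dim M_i=\dim Y+\dim(\text{fibre})$ forces the generic fibre to be $1$-dimensional; by lower semicontinuity of the orbit dimension of the underlying $N$-structure all fibres then have dimension $\le1$, and a point-fibre at some $p_0$ would give $p_0$ a neighbourhood in $Y$ of dimension $n$, a contradiction. Thus $F_p^i$ is a closed connected $1$-dimensional infranilmanifold, $F_p^i\cong\bbS^1$, so $d=1+\dim G_p$ is constant in $p$; writing $H$ for the principal isotropy this gives $\dim\tilde Y-\dim O(n)+\dim H=n-1$, which holds automatically and so does not yet pin down $\dim H$.

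The remaining step, showing $\dim H=\dim G_p=0$, is the real work, and it cannot be done by the dimension count alone: a positive-dimensional $G_p$ acting (almost) freely on $W$ would still yield a manifold quotient, hence a legitimate orbifold chart, so "$G_p$ finite" is genuinely stronger than "$Y$ is an orbifold near $p$". The point to exploit is that $\tilde\eta_i$ has \emph{affine} structure group. In the canonical metric on $FM_i$ the $O(n)$-fibre directions are not rescaled, so the only mechanism by which the collapse can absorb $\dim G_p$ additional $O(n)$-directions into the fibre of $\tilde\eta_i$ is the holonomy of the collapsing circle $F_p^i$, lifted to $FM_i$; affineness of the structure group forces that holonomy to lie in a discrete affine group and hence to have finite order on the compact fibre, so no $O(n)$-direction is dragged in, $d=1$, and $\dim G_p=0$. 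Being a $0$-dimensional closed subgroup of the compact group $O(n)$, every $G_p$ is finite, and the slice-theorem argument of the first paragraph completes the proof; this is precisely the content of \cite[Proposition~11.5]{FukayaOrbifold}. The main obstacle is exactly this last point — upgrading "codimension one" to effective control on the isotropy of the limiting $O(n)$-action, which needs the rigidity supplied by the affine structure group of Fukaya's fibration rather than any soft counting.
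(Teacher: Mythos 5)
First, a point of comparison: the paper does not prove this statement at all --- it is quoted verbatim from Fukaya, \cite[Proposition 11.5]{FukayaOrbifold}, so there is no in-paper argument to measure yours against. Judged on its own, your proposal is sound up to a point: the slice-theorem reduction to $\dim G_p=0$, the identity $d=1+\dim H$, and the observation that $G_p$ acts freely on the infranil fibre $\tilde\eta_i^{-1}(\tilde p)$ (because $O(n)$ acts freely on $FM_i$), whence $\dim F_p^i=d-\dim G_p$, are all correct bookkeeping. The gap is exactly in the step you yourself flag as ``the real work,'' and the specific claims you lean on there are false. The affine group of a compact nilmanifold is not discrete --- $\mathrm{Aff}(\bbS^1)$ contains all rotations --- so ``affineness of the structure group forces the holonomy to lie in a discrete affine group and hence to have finite order'' does not hold; and in any case the structure group of the bundle $\tilde\eta_i$ is a different object from the frame holonomy of short loops in $M_i$, which is what would actually have to be controlled to prevent $O(n)$-directions from being absorbed into the fibre. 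Moreover, even granting $d=1$, your own identity $\dim F_p^i=d-\dim G_p$ still permits $\dim G_p=1$ with $F_p^i$ a single point (the circle fibre entirely swallowed by the isotropy), and your auxiliary claim that a point-fibre would give $p_0$ an $n$-dimensional neighbourhood in $Y$ is unjustified: a circle foliation degenerating to a point still has an $(n-1)$-dimensional local quotient.

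That this cannot be repaired by purely structural arguments is shown by a concrete model: take $P=SO(3)$ with the circle fibration $P\to SO(2)\backslash SO(3)\cong\bbS^2$ and the free right $SO(2)$-action. This is an equivariant circle bundle with affine structure group whose quotient $\bbS^2/SO(2)$ is an interval, i.e.\ of codimension one, yet the isotropy at the two poles is all of $SO(2)$. Every hypothesis your final step invokes is satisfied here, but the conclusion fails. What rules this configuration out as a limit of a sequence in $\mathcal{M}(2,D)$ is the curvature bound itself --- in dimension two, Gauss--Bonnet forces $\vol(\bbS^2)\geq 4\pi$ when $\vert\sec\vert\leq 1$, so $\bbS^2$ cannot collapse to an interval --- and your argument never invokes the curvature bound at this stage, nor does it distinguish codimension one from codimension two, where positive-dimensional isotropy genuinely occurs (e.g.\ $\bbS^3$ collapsed along the standard $T^2$-action to an interval). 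The essential content of the proposition, namely that bounded curvature excludes positive-dimensional isotropy precisely in codimension one, is therefore asserted rather than proved; this is presumably why the paper cites Fukaya for it instead of reproving it.
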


In particular, it follows by the definition of a Riemannian orbifold that $\mathcal{C} \mfdspaceCon$ only consists of \textit{smooth} elements. For the definition of smooth elements in the closure of $\mfdspace$, we refer \cite[Definition 0.4]{FukayaBoundary} for the definition.  This observation is important, as we want to use the following lemma due to Fukaya (c.f.\ \cite[Lemma 7.8]{FukayaBoundary}).
\begin{lem}\label{FukayaSec}
Let $(M_i, x_i)_{i \in \bbN}$ be a sequence of pointed manifolds in the $\dGH$-closure of $\mfdspace$ converging to a smooth element $(Y,p)$. Suppose that the sectional curvature of $M_i$ at $x_i$ are unbounded. Then the dimension of the group $G_p$ is positive.
\end{lem}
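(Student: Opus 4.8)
The plan is to argue directly, manufacturing a nontrivial one-parameter subgroup inside $G_p$ out of the curvature blow-up, with the frame bundle holonomy as the transmission mechanism. By Theorem \ref{FukayaFiber} the frame bundles $FM_i$, equipped with their canonical metrics (horizontal part the pullback of the base metric via the Levi-Civita connection, vertical part a fixed bi-invariant metric on $O(n)$), converge $O(n)$-equivariantly to $\tilde{Y}$, with $Y = \tilde{Y}/O(n)$ and $G_p$ the isotropy group at a frame $\tilde{p} \in \pi^{-1}(p)$. Choosing frames $\tilde{x}_i \in \pi_i^{-1}(x_i)$ converging to $\tilde{p}$, I will exhibit an entire one-parameter group of rotations that moves $\tilde{x}_i$ by distances tending to $0$, whence in the limit it fixes $\tilde{p}$.

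First I fix a curvature direction. Using the frame $\tilde{x}_i$ to identify $T_{x_i}M_i \cong \bbR^n$, the curvature operator becomes $R_i(X,Y) \in \mathfrak{o}(T_{x_i}M_i) \cong \mathfrak{o}(n)$. Let $X_i, Y_i$ be orthonormal vectors realizing the maximal curvature at $x_i$, and set $\kappa_i := |R_i(X_i,Y_i)|$ and $\xi_i := R_i(X_i,Y_i)/\kappa_i$, a unit vector in $\mathfrak{o}(n)$. By hypothesis $\kappa_i \to \infty$, and by compactness of the unit sphere of $\mathfrak{o}(n)$ I may pass to a subsequence with $\xi_i \to \xi$, $|\xi| = 1$. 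Now fix $t > 0$, put $s_i := (t/\kappa_i)^{1/2} \to 0$, and consider in $M_i$ the (approximately geodesic) parallelogram loop $c_i$ based at $x_i$ with sides of length $s_i$ in the directions $X_i, Y_i$, so that $l(c_i) \to 0$. The parallel transport around $c_i$, expressed in the frame $\tilde{x}_i$, equals $g_i = \exp\!\bigl(-s_i^2 R_i(X_i,Y_i) + O(s_i^3)\bigr) = \exp\!\bigl(-t\,\xi_i + O(s_i)\bigr)$, hence $g_i \to \exp(-t\,\xi)$ in $O(n)$. Since the horizontal lift of $c_i$ from $\tilde{x}_i$ is a curve in $FM_i$ of the same length ending at the frame $g_i \cdot \tilde{x}_i$,
\begin{align*}
d_{FM_i}(\tilde{x}_i,\, g_i \cdot \tilde{x}_i) \leq l(c_i) \longrightarrow 0 .
\end{align*}

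Finally I pass to the limit. Under the $O(n)$-equivariant convergence $FM_i \to \tilde{Y}$ with $\tilde{x}_i \to \tilde{p}$ and $g_i \to \exp(-t\,\xi)$, the displayed estimate forces $\exp(-t\,\xi)\cdot\tilde{p} = \tilde{p}$, i.e.\ $\exp(-t\,\xi) \in G_p$. Running this for all small $t > 0$ and using that $\{t \in \bbR : \exp(t\,\xi) \in G_p\}$ is a closed additive subgroup of $\bbR$ (recall $G_p \subseteq O(n)$ is closed, and reversing $c_i$ supplies the inverses), we conclude it contains an interval and hence all of $\bbR$. Thus $G_p$ contains the nontrivial one-parameter subgroup $\{\exp(t\,\xi) : t \in \bbR\}$, and therefore $\dim G_p \geq 1$.

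The routine points are the second-order holonomy expansion and the fact that horizontal lifts preserve length for the canonical frame-bundle metric. The main obstacle is controlling the error $O(s_i^3)$ in the holonomy, which hides a factor involving the spatial growth of the curvature, $|\nabla R_i|$: a priori $|\nabla R_i|$ may blow up even faster than $\kappa_i^{3/2}$, in which case the estimate $g_i \to \exp(-t\xi)$ fails. I expect to resolve this by rescaling to $\bar g_i := \kappa_i g_i$ so that the curvature operator has unit norm at $x_i$, combined with a point-selection (moving $x_i$ so that the curvature scale dominates its own derivatives on a definite $\bar g_i$-ball) and the derivative-of-curvature bounds available in the invariant-metric framework of Theorem \ref{invMetric}; this guarantees a bounded-geometry blow-up on which the cubic term is genuinely negligible for small $t$. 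The second place where care is needed is the passage to the limit itself: converting the metric estimate $d_{FM_i}(\tilde{x}_i, g_i \cdot \tilde{x}_i) \to 0$ into the exact relation $\exp(-t\xi)\cdot\tilde{p} = \tilde{p}$ relies essentially on the $O(n)$-equivariant Gromov–Hausdorff convergence of Theorem \ref{FukayaFiber}, so that the group actions themselves converge and distances pass to the limit.
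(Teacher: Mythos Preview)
First, note that the paper does not supply its own proof of this lemma; it is quoted from Fukaya \cite[Lemma 7.8]{FukayaBoundary}. Nonetheless, your attempt has a structural gap that would prevent it from going through.

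You invoke Theorem~\ref{FukayaFiber} to set up the frame bundle $FM_i$ with its $O(n)$-action and argue via holonomy of small loops in $M_i$. But Theorem~\ref{FukayaFiber} applies to sequences \emph{in} $\mfdspace$, whereas here the $M_i$ lie only in its $\dGH$-closure. The non-vacuous case of the lemma is precisely $\dim M_i = m < n$: if $\dim M_i = n$ then $M_i$ sits in the $C^{1,\alpha}$-closure of $\mfdspace$, where the bound $|\sec|\leq 1$ persists and the hypothesis is empty. When $m<n$, the orthonormal frame bundle $FM_i$ is an $O(m)$-bundle and is \emph{not} the $O(n)$-space $\tilde{M}_i$ that Fukaya attaches to $M_i$ (namely the limit of the frame bundles of an approximating sequence of $n$-manifolds, with $M_i=\tilde{M}_i/O(n)$). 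Your holonomy construction therefore manufactures elements of $O(m)$, which bear no a priori relation to the isotropy group $G_p\subset O(n)$ defined via $\tilde{Y}$.

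The mechanism behind Fukaya's lemma is of a different nature: each $\tilde{M}_i$ inherits uniformly bounded geometry from the frame bundles it approximates, so the curvature of the quotient $M_i=\tilde{M}_i/O(n)$ at $x_i$ is controlled, through the O'Neill formulas for the orbit map, by the extrinsic geometry of the orbit $O(n)\cdot\tilde{x}_i$. A blow-up of $\sec^{M_i}(x_i)$ therefore forces these orbits to degenerate, and in the limit $\dim\bigl(O(n)\cdot\tilde{p}\bigr)<\dim O(n)$, i.e.\ $\dim G_p>0$. Separately, the remedy you propose for the $O(s_i^3)$ holonomy remainder---Abresch smoothing and Theorem~\ref{invMetric}---again only applies to manifolds in $\mfdspace$, not to lower-dimensional elements of its closure, so there is no $A$-regularity available on $M_i$ to control $|\nabla R_i|$.
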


Combining this with Proposition \ref{OrbiProp} we derive the following properties of $\mathcal{C}\mfdspaceCon$.

\begin{thm}
The space $\mfdspaceCon$ is precompact in the Gromov-Hausdorff topology. Thus, any sequence $(M_i, g_i)_{i \in \bbN}$ contains a subsequence that either converges to a Riemannian manifold of the same dimension in the $C^{1, \alpha}$-topology or  collapses to a compact Riemannian orbifold $Y$ of Hausdorff-dimension $(n-1)$ such that the metrics $(\eta_i)_{\ast} g_i$ converges in the $C^{1, \alpha}$-topology on $Y$. Furthermore, any element $Y$ in  $\mathcal{C}\mfdspaceCon$  with $\dim(Y) = n-1$ satisfies $\Vert \sec^Y \Vert_{L^{\infty}} \leq K$ and $\vol(Y) > v$ for positive constants $v$ and $K$ depending on $n$, $D$ and $C$.
\end{thm}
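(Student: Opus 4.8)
The plan is to assemble the statement from three ingredients already in place: Gromov's precompactness of $\mfdspace$, Theorem \ref{MainTheorem}, and Lemmas \ref{FukayaSec} and \ref{OrbiProp}. First, precompactness of $\mfdspaceCon$ is immediate since $\mfdspaceCon \subset \mfdspace$ and the latter is precompact by \cite[Theorem 5.3]{GromovPreComp}; so any sequence $(M_i,g_i)_{i\in\bbN}$ in $\mfdspaceCon$ has a $\dGH$-convergent subsequence with some compact limit $Y$. If the subsequence does not collapse, then $\dim_{\mathrm{Haus}}(Y) = n$ and the injectivity radii are uniformly bounded below, so by \cite{CheegerFiniteness}, \cite{PetersC1a} the convergence is in the $C^{1,\alpha}$-topology to a closed Riemannian $n$-manifold. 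If it does collapse, then since the volume condition $C \leq \vol(M_i)/\inj(M_i)$ is exactly \eqref{condition} with $r$ large enough that $B^{M_i}_r(x) = M_i$ for all $x$ (which holds eventually, as $\diam(M_i)\le D$), condition iii) of Theorem \ref{MainTheorem} is satisfied along the subsequence with this fixed $r$ and constant $C$. Hence Theorem \ref{MainTheorem} gives $\dim_{\mathrm{Haus}}(Y) \geq n-1$, and since the sequence collapses, $\dim_{\mathrm{Haus}}(Y) = n-1$. By Proposition \ref{OrbiProp}, $Y$ is then a Riemannian orbifold, and by the reasoning in the proof of Proposition \ref{OneTwo} — descending the $\bbS^1$-orbifold bundle structure — together with Theorem \ref{invMetric}, the pushed-forward metrics $(\eta_i)_\ast g_i$ converge in $C^{1,\alpha}$ on $Y$.

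Next I would establish the uniform curvature bound $\Vert \sec^Y \Vert_{L^\infty} \leq K$. Here the key observation is that since $\dim Y = n-1$, Proposition \ref{OrbiProp} tells us that all the groups $G_p$ are \emph{finite}, in particular zero-dimensional. The contrapositive of Lemma \ref{FukayaSec} then says: at every point $p \in Y$ (viewed as a limit of pointed manifolds $(M_i, x_i)$ with $x_i \in \eta_i^{-1}(p)$), the sectional curvatures of $M_i$ at $x_i$ must remain bounded as $i \to \infty$. Combined with the almost-Riemannian-submersion property (Theorem \ref{FukayaFiber} iii), or more precisely with the $C^{1,\alpha}$-convergence of $(\eta_i)_\ast g_i$ already obtained, this yields a bound on $\sec^Y$ on the regular part. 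To make this uniform over all of $\mathcal{C}\mfdspaceCon$ rather than just a single limit $Y$, I would run this argument by contradiction on the level of sequences: if no such $K$ existed, we could pick limits $Y^{(m)}$ and points $p_m$ with $\mathrm{ess\,sup}$ of the curvature exceeding $m$, pass to a pointed sublimit by precompactness, and derive a point where $G_p$ has positive dimension, contradicting finiteness. The bound depends only on $n$, $D$, $C$ because these are the only parameters entering.

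Finally, for the uniform volume lower bound $\vol(Y) > v$: the natural approach is again by contradiction. Suppose there were limits $Y^{(m)} \in \mathcal{C}\mfdspaceCon$ of dimension $n-1$ with $\vol(Y^{(m)}) \to 0$. Each arises as a collapse of some $(M_i^{(m)}, g_i^{(m)})$; by a diagonal argument one produces a single collapsing sequence in $\mfdspaceCon$ whose limit has dimension $n-1$ but zero volume. Choosing $r = D$ so that the balls are all of $M_i$, the inequality
\begin{align*}
\vol(M_i) \geq C \cdot \inj^{M_i}(x)
\end{align*}
holds, while the bound from the proof of Proposition \ref{OneTwo}, namely $\vol(M_i) \geq C_1 \vol(B^Y_{D/2}(p)) \cdot 2\inj(F_p^i)$, together with the fiber-injectivity estimate of Proposition \ref{injFiber} and $\inj(F_p^i) \ge \inj^{M_i}(x)/C_2$, shows $\vol(M_i)/\inj^{M_i}(x)$ is controlled above and below by a positive multiple of $\vol(B^Y_{D/2}(p))$; since the $C^{1,\alpha}$-limit metric on $Y$ has uniformly bounded geometry (curvature bound from the previous paragraph, diameter $\le D$), the orbifold-volume of a $\tfrac{D}{2}$-ball is bounded below by a constant depending only on $n, D, C$ — forcing $\vol(Y)$ away from $0$. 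I expect the main obstacle to be the uniformity statements: the delicate point is that $K$ and $v$ must be uniform over the whole closure $\mathcal{C}\mfdspaceCon$, which requires the contradiction-via-diagonal-sequence packaging above rather than a direct estimate, and requires care that Lemma \ref{FukayaSec} and the volume estimates are being applied with constants depending only on $n$, $D$, $C$ and not on the particular sequence.
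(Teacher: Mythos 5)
Your overall architecture matches the paper's: precompactness is inherited from $\mfdspace$, the non-collapsed case is handled by $C^{1,\alpha}$-precompactness under an injectivity radius bound, the collapsed case invokes Theorem \ref{MainTheorem} (with $r$ large enough that the $r$-balls are all of $M_i$) plus Proposition \ref{OrbiProp}, and the curvature bound is exactly the paper's contradiction argument combining Lemma \ref{FukayaSec} with the finiteness of the groups $G_p$. Two points deserve comment. First, a minor one: for the $C^{1,\alpha}$-convergence of $(\eta_i)_*g_i$ you appeal to the reasoning of Proposition \ref{OneTwo} together with Theorem \ref{invMetric}, but neither of these produces convergence of metrics in any topology; the paper instead applies the $O(n)$-equivariant version of Gromov's compactness theorem to the frame bundles to get $C^{1,\alpha}$-convergence of $(\tilde\eta_i)_*g_i^F$ on $\tilde Y$ and then descends. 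Your route as written does not actually yield the convergence statement.

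Second, and more seriously, your volume argument contains a genuine gap. After the diagonal construction you assert that the resulting limit ``has dimension $n-1$ but zero volume,'' and you then try to close the argument by claiming that the $\frac{D}{2}$-ball in $Y$ has volume bounded below ``since the $C^{1,\alpha}$-limit metric on $Y$ has uniformly bounded geometry (curvature bound \ldots, diameter $\le D$).'' That implication is false: bounded curvature and bounded diameter do \emph{not} bound volume from below --- this is precisely the phenomenon of collapse that the whole paper is about. The correct conclusion of the diagonal argument is different: if $\vol(Y^{(m)})\to 0$ while the $Y^{(m)}$ have uniformly bounded curvature and diameter, then the sequence $(Y^{(m)})$ itself collapses to a space of dimension at most $n-2$, so the diagonal sequence in $\mfdspaceCon$ converges to a space of codimension at least $2$, contradicting Theorem \ref{MainTheorem}. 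That is the paper's (short) argument. Alternatively, your inequality chain $C \le \vol(M_i)/\inj^{M_i}(x) \le 2C_1C_2\,\vol(B^Y_D(p)) = 2C_1C_2\,\vol(Y)$ could be turned into a direct proof of $\vol(Y)\ge C/(2C_1C_2)$ without any appeal to bounded geometry, but then you must justify that $C_1$ and $C_2$ depend only on $n$, $D$, $C$ and that the hypotheses of Proposition \ref{injFiber} (in particular $\inj^{M_i}(x) < \tfrac14\inj^Y(p)$ and the local $A$- and $T$-tensor bounds near a regular point) hold uniformly --- none of which is addressed in your sketch.
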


\begin{proof}
Let $(M_i, g_i)_{i \in \bbN}$ be a sequence in $\mfdspaceCon$. Then there exists by Gromov's compactness result a $\dGH$-convergent subsequence, converging to $Y$. 

If $\dim(Y) = n$ then the injectivity radius of the manifolds $M_i$ is uniformly bounded from below by a constant $i_0$. Thus, this sequence lies in $\mathcal{M}(n,D,i_0)$ and the claim follows as this space is precompact in the $C^{1, \alpha}$-topology.

If $\dim(Y) < n$, then $\dim(Y) = (n-1)$ by Theorem \ref{MainTheorem}. Thus, $Y$ is a Riemannian orbifold by Proposition \ref{OrbiProp}. Applying the $O(n)$-equivariant version of Gromov's compactness result there is a subsequence $(M_i,g_i)_{i \in \bbN}$ such that $(\tilde{\eta}_i)_{\ast}g^F_i$ converges on $\tilde{Y}$ in the $C^{1, \alpha}$-topology to an $O(n)$-equivariant metric. Here $g^F_i$ denotes the metric on $FM_i$ induced by $g_i$. As $Y$ is a Riemannian orbifold, we also obtain that $(\eta_i)_{\ast} g_i$ converges in $C^{1, \alpha}$. This proves the first part of the proposition.

For the second part, assume that there is a sequence $(Y_i)_{i \in \bbN}$ of $(n-1)$-dimensional orbifolds in $\mathcal{C}\mfdspaceCon$ such that there is a sequence of points $p_i \in Y_i$ where the sectional curvatures are unbounded in $i$. As each element $Y_i$ can be reached by a collapsing sequence in $\mfdspace$, there is a subsequence $(Y_i)_{i \in \bbN}$ converging to an element $Y_{\infty}$ in $\mathcal{C}\mfdspaceCon$ and a point $p_{\infty}$ with unbounded sectional curvature. By a diagonal sequence argument and Theorem \ref{MainTheorem} it follows that $Y_{\infty}$ is a Riemannian orbifold. As $\mathcal{C}\mfdspaceCon$ is a subset of the $\dGH$-closure of $\mfdspace$ we can apply Lemma \ref{FukayaSec}. It follows that the group $G_{p_{\infty}}$ has positive dimension. This is a contradiction, as the group $G_{p_{\infty}}$ is finite finite by Proposition \ref{OrbiProp}. Thus, there exists a constant $K(n,D,C,r)$ as claimed. 

The volume bound follows also by contradiction. Hence, there exists a sequence $(Y_i, h_i)_{i \in \bbN}$ such that $\dim(Y_i) = (n-1)$ for all $i$ and $\vol(Y_i)$ converging to $0$ as $i$ tends to infinity. In other words the sequence $(Y_i, h_i)_{i \in \bbN}$ collapses. By a diagonal sequence argument one obtains a sequence $(M_j, g_j)_{j \in \bbN}$ in $\mfdspaceCon$ converging to a space of at least codimension $2$. This is a contradiction, by Theorem \ref{MainTheorem}.
\end{proof}


\nocite{*}
\bibliography{literature.bib}
\bibliographystyle{amsalpha}

\end{document}